\newtheorem{theorem}{Theorem}[section]
\newtheorem{definition}[theorem]{Definition}
\newtheorem{lemma}[theorem]{Lemma}
\newtheorem{proposition}[theorem]{Proposition}
\newtheorem{remark}[theorem]{Remark}
\newtheorem{corollary}[theorem]{Corollary}
\newenvironment{proof}[1][Proof]{\noindent\textbf{#1.} }{\ \rule{0.5em}{0.5em}}
\newcommand{\R}{\mathbb{R}}
\newcommand{\N}{\mathbb{N}}
\newcommand{\cl}[1]{\mbox{\rm cl}(#1)}
\newcommand{\tr}{{\rm tr}}
\newcommand{\qed}{\nobreak \ifvmode \relax \else
      \ifdim\lastskip<1.5em \hskip-\lastskip
      \hskip1.5em plus0em minus0.5em \fi \nobreak
      \vrule height0.75em width0.5em depth0.25em\fi}
\begin{document}

\title{Minimal Failure Probability for Ceramic Design via Shape Control}
\author{Matthias Bolten \and Hanno Gottschalk \and Sebastian Schmitz}

\maketitle


\vspace{.3cm}

\begin{abstract}
We consider the probability of failure for components made of
brittle materials under one time application of a load, as
introduced by Weibull and Batdorf-Crosse. These models have been
applied to the design of ceramic heat shields of space shuttles
and to ceramic components of the combustion chamber in gas
turbines, for example. In this paper, we introduce the probability of failure
as an objective functional in shape optimization. We study the
convexity and the lower semi-continuity properties of such
objective functionals and prove the existence of optimal shapes in
the class of shapes with a uniform cone property. We also shortly
comment on shape derivatives and optimality conditions.
\end{abstract}

\noindent {\bf Key words:} probabilistic failure of ceramic structures \and shape
optimization \and optimal reliability

\noindent {\bf MSC (2010)}: 49Q10, 60G55

\section{Introduction}
Ceramics frequently is chosen to construct mechanical components.
Ceramics is temperature resistant and does not react with oxygen,
sulphur or hydrogen even at high temperatures. On the negative
side, the brittleness exposes ceramic structures to the risk of
spontaneous failure due to stress concentration at prefabricated
voids or inclusions. As the formation of such microcracks is
unavoidable in the sintering process and is stochastic by nature,
the failure under or the resistance to a given mechanical load is
a random event that occurs with a given failure probability.  This
was the insight by E. W. Weibull in his classical paper
\cite{Wei}.

In this article, we consider the probability of failure of a
mechanical component under a given load as objective functional in
shape optimization \cite{Ala,DZ,Epp,HM,SZ}. In the design of
ceramic components, models for the probability of failure have
been worked out for quite some time
\cite{Wei,BC,Heg,NMG,RBZ,RSEK,WD,Zie} and have found their way
into standard textbooks; see e.g.~\cite{MF}. The area of
application ranges from the design of heat shields in gas turbine
combustion chambers \cite{Heg,BHDRH,Hue} to those of the space
shuttle \cite{NMG}. Here, we follow the approach of \cite{BC} that
is also supported experimentally \cite{BFMS} in the case of small
flaw sizes. All these models have in common that the back reaction
of the cracks on the stress state is neglected.

While more detailed models are well studied in the materials
science community, see e.g.~\cite{Gam}, the models used here are
easy to implement on the basis of standard finite element software
by a simple post processing step involving some numerical
quadrature, only \cite{RBZ,RSEK}. Furthermore, they are comparatively
conservative in the number of parameters introduced by the models
and calibration procedures for these parameters are well studied
and standardized \cite{MF}. All these are important requirements
from industrial design processes.

We will prove that such probabilistic objective functionals, after
appropriate transformation to an equivalent problem, fulfil the
convexity requirements of Fujii \cite{Fuj} and thus are lower
semicontinuous in the weak topology of the Sobolev space
on a bounded constructed domain where the admissible shapes
share parts of their boundaries with this domain, cf.\ 
Figure~\ref{fig:omega_and_omega_hat}, and fulfill the uniform
cone property.

In the next step we apply lower semicontinuity to the problem of
shape optimization. Here the state equation is linear elasticity,
for simplicity~\cite{Ciar}. We 
conclude that there exists a shape
that has the lowest probability of failure among all
admissible shapes. This is the main result of our paper.

Although the existence result in this article is less general in
terms of the objective functionals than \cite{GoS}, it requires
much less restrictive boundary regularity assumptions and
technically follows a rather independent route.   For other work
on optimal design with the linear elasticity PDE as state
equation, see e.g.\ \cite{Ala,Epp,HM,ABFJ} and references
therein. These works however use objective functionals which
considerably differ in their design intention and mathematical
properties from what we consider here. This in particular applies
to the compliance functional, which is not directly related to the
failure of the component.

The paper is organised as follows. Section 2 essentially fixes
notation for the state equation and recalls well known facts from
linear elasticity. In Section 3 we give some background material
from linear fracture mechanics and introduce the Poisson point
process in order to derive failure probabilities. We derive
objective functionals that are minimal, if and only if the
probability of failure is minimal and which fit nicely into the
standard framework of shape optimization. Section 4 proves
convexity of the resulting objective functionals. In Section 5 we
apply the strategy of~\cite{Fuj} to conclude that optimally
reliable designs exist. Section 6 gives a short conclusion and an
outlook to shape derivatives and optimality conditions.

\section{\label{sect:Elasticity}Linear Elasticity in the Weak Formulation}
Let us start with our assumptions on the form of the ceramic component. We assume that the compact body $\Omega\subseteq \R^3$ is filled with the ceramic material. It is assumed that the boundary $\partial\Omega$ of $\Omega$ can be decomposed into three portions with not vanishing surface volume,
\begin{equation}
\label{sufaceDecomp}
\partial\Omega=\cl{\partial\Omega_D}\cup\cl{\partial\Omega_{N_\text{fixed}}}\cup
\cl{\partial\Omega_{N_\text{free}}}.
\end{equation}
The part is assumed to be fixed on $\partial\Omega_D$, the Dirichlet-Portion of the boundary. Furthermore, only on $\partial\Omega_{N_\text{fixed}}$ surface forces may act.  The free portion $\partial\Omega_{N_\text{free}}$ can be modified in order to optimally comply with the design objective `reliability', as we will explain below. For technical reasons that will become clear in Section \ref{sect:OptimalReliability}, the free boundary is assumed to be force-free. Furthermore, we assume that there is some bounded set $\hat\Omega\subseteq \R^ 3$ such that $\Omega\subseteq \hat \Omega$ for all admissible choices of the free portion of the boundary; see Figure \ref{fig:omega_and_omega_hat} for a two dimensional sketch.

Here we consider the linear elasticity partial differential equation as the state equation. Let $f:\hat \Omega\to\R^ 3$ be the the volume forces, e.g.\ gravitational or centrifugal forces, and $g:\hat\Omega\to\R^ 3$ the forces acting on the component's surface $\partial\Omega$, e.g.\ pressure or traction forces. With $u(\Omega):\Omega\to\R^3$ a twice differentiable function representing the displacement of $\Omega$ under the given loads, we consider the partial differential equation in the strong form
\begin{equation}
\label{strongEquation}
\begin{array}{ll}
-{\rm div} \sigma(u(\Omega))=f & \mbox{ on } \Omega\\
u(\Omega)=0&\mbox{ on } \partial \Omega_D\\
\sigma(u(\Omega))\hat n =g&\mbox{ on } \partial\Omega_{N_\text{fixed}}\\
\sigma(u(\Omega))\hat n =0&\mbox{ on } \partial\Omega_{N_\text{free}}
\end{array}
\end{equation}
with $\varepsilon(Du)=\frac{1}{2}(Du+Du^*)$ the elastic strain field, $\sigma(Du)=\lambda\, \tr(\varepsilon(Du))  I+2\mu\, \varepsilon(Du)$ the elastic stress field and $\mu,\lambda>0$ Lam\'e's constants. $Du$ stands for the Jacobi matrix of $u$ and $\hat n$ represents the outward directed unit normal vector field on $\partial\Omega$ provided that $\partial\Omega$ is piecewise differentiable. The index $N$ at the portions of the boundary here refers to natural boundary conditions\footnote{We note that in \cite{SZ} a different set of conditions are referred to as Neumann boundary conditions, hence the term natural boundary conditions in order to avoid confusion.}, which can be considered to be the proper generalization of Neumann boundary conditions to the case of systems of elliptic partial differential equations.

The theory of strong solutions of (\ref{strongEquation}) is quite involved \cite{Ciar}. Both for analytical and numerical reasons, the weak formulation of (\ref{strongEquation}) is generally preferred. We start stating the necessary regularity requirements on $\partial\Omega$ first.
We use the notation
\begin{equation}
\label{eqa:Cone}
C(\zeta,\theta,l):=\{x\in\mathbb{R}^3: |x|<l, x\cdot \zeta>|x|\cos(\theta)\}
\end{equation}
for the cone with height $l$, direction $\zeta$, and opening angle $\theta$.  We need the following definition:
\begin{definition}[\cite{Fuj,Chen}]
\label{defUC}
Let $\hat \Omega$ be a bounded open set in $\mathbb{R}^3$. For $\theta \in ]0,\pi/2[$, $l>0$, $r>0$, $2r \leq l$. By $\Pi(\theta,l,r)$ we denote the set of all subsets $\Omega$ of $\hat\Omega$ satisfying the cone property, i.e., for any $x \in \partial\Omega$ there exists a cone $C_x = C_x(\zeta_x,\theta,l)$, where $\zeta_x$ denotes a unit vector in $\mathbb{R}^3$, s.t.
\[
y + C_x \subset \Omega, \quad y \in B(x,r) \cap \Omega,
\]
where $B(x,r)$ is the open ball in $\mathbb{R}^n$ with radius $r$ centred at $x$.
\end{definition}

Based on the notion of the cone property and with Figure \ref{fig:omega_and_omega_hat} in mind, we can now define the admissible shapes or for our problem:
\begin{definition}
Let $\hat\Omega\subseteq \R^3$ be an fixed open set fulfilling the cone property with respect to some $\theta,l,r$ as in Definition \ref{defUC}. Let $\partial\Omega_D,\partial\Omega_{N_{\text{fixed}}}\subseteq \partial\hat\Omega$. Then,
 we define the admissible shapes
\[
\mathscr{O}^{\rm ad} := \{ \Omega \in \Pi(\theta,l,r)\ :\ \Omega \subset \widehat{\Omega}, \partial\Omega_D\subseteq \partial\Omega , \partial\Omega_{N_{\text{fixed}}}\subseteq \partial\Omega\}.
\]
Furthermore, for $V>0$, we define the admissible shapes with volume constraint $V$
\[
\mathscr{O}^{\rm ad}_V:=\{\Omega\in\mathscr{O}^{\rm ad}: |\Omega|=V\}.
\]
Here $|\Omega|:=\int_\Omega dx$ is the Lebesque volume of $\Omega$.
\end{definition}

Let $\Omega\in \Pi(\theta,l,r)$ and let $H^1(\Omega,\R^3)$ be the Sobolev space of $L^2(\Omega,\R^3)$ functions with square integrable first weak derivative, cf.\ \cite{Eva}. Then the restriction of $u\in H^1(\Omega,\R^3)$ to $\partial\Omega$ exists \cite{Ada} and we can define
\[
H^1_{\partial \Omega_D}(\Omega,\R^3)=\{u\in H^1(\Omega,\R^3): u\restriction_{\partial \Omega_D}=0\}.
\]
If we take the scalar product of both sides of (\ref{strongEquation}) with a test function $v\in H^1_{\partial\Omega_D}(\Omega,\R^3)$ and integrate over $\Omega$, we obtain the weak form of the elasticity PDE  on $\Omega$ with given loads $g\in L^2(\partial\Omega_{N_{\text{fixed}}},\R^3)$, $f\in L^2(\hat \Omega,\R^3)$ after application of the divergence theorem for Sobolev spaces \cite{Ada,Ciar}
\begin{equation}
\label{WeakEquation}
\mathscr{B}_\Omega(u(\Omega),v)=\int_\Omega f\cdot v\,dx+\int_{\partial\Omega_{N_\text{fixed}}} g\cdot v \,ds,~~~\forall v\in  H^1_{\partial \Omega_D}(\Omega,\R^3).
\end{equation}
The left-hand side of \eqref{WeakEquation} is given by
\begin{figure}
 \begin{center}
  \resizebox{0.4\textwidth}{!}{\input{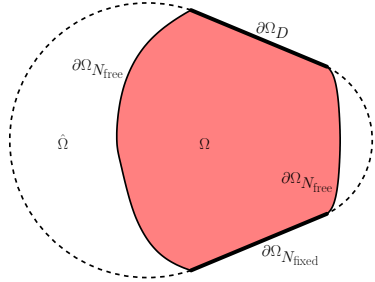}}
 \end{center}
 \caption{Domains $\Omega$ and $\hat{\Omega}$ represented in 2D, for simplicity.\label{fig:omega_and_omega_hat}}
\end{figure}

\begin{equation}
\label{BilinearForm}
\mathscr{B}_\Omega(u,v):=\int_\Omega \varepsilon(Du):\sigma(Dv) \, dx
\end{equation}
with $\varepsilon(Du)=\frac{1}{2}(Du+Du^*)$ the elastic strain field, $\sigma(Du)=\lambda\, \tr(\varepsilon(Du))  I+2\mu\, \varepsilon(Du)$ the elastic stress field and $\mu,\lambda>0$ Lam\'e's constants. $Du$ stands for the Jacobi matrix of $u$, and ${\rm tr}$ denotes the trace.

Furthermore, the elastic stress energy density $\varepsilon(Du):\sigma(Du)$  fulfils the following ellipticity condition
\begin{equation}\label{eq:elasticity_tensor_ellipticity}
\varepsilon(Du):\sigma(Du)\geq 2\mu \, \varepsilon(Du):\varepsilon(Du).
\end{equation}
From Korn's inequality for the displacement-traction problem \cite[Theorem 6.3-4]{Ciar}, we can now deduce the coercivity of $B_\Omega(.,.)$ on $H^ 1_{\partial\Omega_D}(\Omega,\R^ 3)$ and obtain the existence and uniqueness of the weak solution by the Lax-Milgram theorem \cite[Theorem 6.3-2]{Ciar}.

\begin{theorem}[Solution of the state equation, \cite{Ciar}]
\label{thm:stateEquation}
 Let $f\in L^2(\hat\Omega,\R^3)$, \linebreak$g \in L^2(\partial\Omega_{N_{\text{fixed}}},\R^3)$ and $\Omega\in\mathscr{O}^{\rm ad}$. Then, there exists a unique solution $u(\Omega)\in \linebreak H^1_{\partial\Omega_D}(\Omega,\R^3)$ to the linear elasticity PDE in its weak form (\ref{WeakEquation}).
\end{theorem}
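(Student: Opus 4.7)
The plan is to apply the Lax--Milgram theorem to the variational problem~(\ref{WeakEquation}) on the subspace $V := H^1_{\partial\Omega_D}(\Omega,\R^3)$ of $H^1(\Omega,\R^3)$. Three ingredients have to be verified: $V$ is a Hilbert space, the bilinear form $\mathscr{B}_\Omega$ is bounded and coercive on $V\times V$, and the right-hand side of~(\ref{WeakEquation}) defines a bounded linear functional on $V$.

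For the first point, because $\Omega\in\mathscr{O}^{\rm ad}$ satisfies the uniform cone property from Definition~\ref{defUC}, the standard trace theorem provides a continuous trace operator $H^1(\Omega,\R^3)\to L^2(\partial\Omega,\R^3)$. The condition $u\restriction_{\partial\Omega_D}=0$ then cuts out a closed, and hence Hilbert, subspace $V\subseteq H^1(\Omega,\R^3)$. Boundedness of $\mathscr{B}_\Omega$ is routine: since $\varepsilon(Du):\sigma(Dv)$ is pointwise controlled by a constant depending only on $\lambda,\mu$ times $|Du|\,|Dv|$, Cauchy--Schwarz yields $|\mathscr{B}_\Omega(u,v)|\leq C(\lambda,\mu)\|u\|_{H^1}\|v\|_{H^1}$.

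The crucial step is coercivity. The pointwise ellipticity estimate~(\ref{eq:elasticity_tensor_ellipticity}) gives
\[
\mathscr{B}_\Omega(u,u)\geq 2\mu\int_\Omega \varepsilon(Du):\varepsilon(Du)\,dx.
\]
Korn's inequality for the displacement-traction problem (\cite[Theorem 6.3-4]{Ciar}) then turns this into an estimate $\mathscr{B}_\Omega(u,u)\geq \kappa(\Omega)\,\|u\|_{H^1}^2$ for some $\kappa(\Omega)>0$ and all $u\in V$. The hypotheses of that theorem are precisely a cone-type boundary regularity and a Dirichlet portion of positive surface measure, both of which are built into the definition of $\mathscr{O}^{\rm ad}$ and the surface decomposition~(\ref{sufaceDecomp}). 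Continuity of the right-hand side is immediate: Cauchy--Schwarz bounds $\int_\Omega f\cdot v\,dx$ by $\|f\|_{L^2(\hat\Omega)}\|v\|_{L^2(\Omega)}$, and the trace estimate together with Cauchy--Schwarz in $L^2(\partial\Omega_{N_\text{fixed}})$ gives an analogous bound for $\int_{\partial\Omega_{N_\text{fixed}}}g\cdot v\,ds$ in terms of $\|g\|_{L^2(\partial\Omega_{N_\text{fixed}})}\|v\|_{H^1}$. Lax--Milgram (\cite[Theorem 6.3-2]{Ciar}) then delivers the unique $u(\Omega)\in V$ solving~(\ref{WeakEquation}).

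The main obstacle is essentially bookkeeping rather than genuinely new content: one has to check that the uniform cone property of $\mathscr{O}^{\rm ad}$ is strong enough to invoke both the trace theorem and the form of Korn's inequality we need. Since both classical results hold under a cone property, the verification reduces to citing the theorems from Ciarlet, as the authors already anticipate in the text preceding the statement.
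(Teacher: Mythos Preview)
Your proposal is correct and follows exactly the route the paper itself indicates: the paper does not give a separate proof but, in the sentence immediately preceding the theorem, sketches the argument via the ellipticity estimate~(\ref{eq:elasticity_tensor_ellipticity}), Korn's inequality for the displacement-traction problem \cite[Theorem~6.3-4]{Ciar}, and the Lax--Milgram theorem \cite[Theorem~6.3-2]{Ciar}. Your write-up simply fills in the standard verifications (closedness of $V$, boundedness of $\mathscr{B}_\Omega$, continuity of the right-hand side) that the paper leaves implicit.
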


\begin{figure}
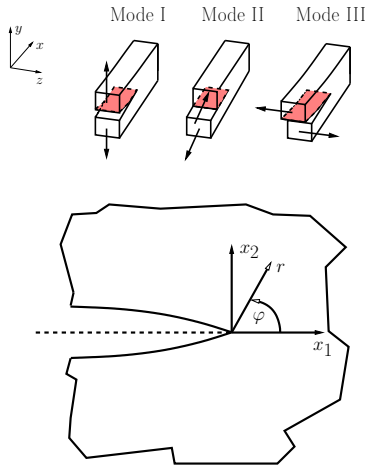

 \begin{center}
  \resizebox{0.4\textwidth}{!}{\input{Modus_I_II_III.pspdftex}}

\vspace{.5cm}

  \resizebox{0.35\textwidth}{!}{\input{Fracture_mechanics_r_phi_coords_1.pspdftex}}
 \end{center}
 \caption{Different modes of the loading (top). $r$-$\varphi$ coordinate system at the tip of the crack (bottom). \label{fig:modusI-III}}
\end{figure}

\section{Survival Probabilities from Linear Fracture Mechanics}
This section is devoted to the derivation of objective functions based on solutions to the state equation, linear fracture mechanics and Weibull's analysis of the stochastic nature of the ultimate strength of brittle material \cite{Wei}.

 Let us first recall some elements of the classical engineering analysis of spontaneous failure of mechanical components from brittle material under given mechanical loads. In linear fracture mechanics, the three dimensional stress field close to a crack in a two dimensional plane close to the tip of the crack is of the form
\begin{equation}
\label{multiAx}
\sigma=\frac{1}{\sqrt{2\pi r}}\{K_I\tilde\sigma^I(\varphi)+K_{II}\tilde \sigma^{II}(\varphi)+K_{III}\tilde \sigma^{III}(\varphi)\}+\mbox{regular terms},
\end{equation}
where the detailed form of the shape functions $\tilde \sigma^\#(\varphi)$ is determined by complex analysis, \cite[chapter 4]{GS}. Here $r$ is the distance to the crack front and $\varphi$ the angle of the shortest connection point considered to the crack front with the crack plane. The $K$-factors -- also called stress intensity factors -- depend on the amount and the mode of the loading, cf.\ Figure~\ref{fig:modusI-III}, and the geometry of the crack. While no simple stochastic models for the crack shapes exist, it is customary to use the concept of equivalent circular disk shaped reflectors to register crack sizes with non destructive ultra sonic measurements. Here we follow this approach and restrict ourselves to circular `penny shaped' cracks. Considering e.g. the tensile loading $\sigma_n$ in a normal direction of the stress plane and the crack geometry circular with radius $a$, one obtains
\begin{equation}
\label{Kfactor}
K_I:=\frac{2}{\pi}\sigma_n \sqrt{\pi a}.
\end{equation}
Failure occurs in the ceramic component if $\sigma_n$ is positive and is large enough such that the stress intensity $K_I$ exceeds a critical value $K_{Ic}$. Typical $K_{Ic}$ values for ceramics that are measured in mechanical tests are $(3 \mbox{ to } 16) \, [{\rm MPa\sqrt{m}}]$.  Apparently, in the case of compressive loads, i.e., $\sigma_n<0$, no failure will occur no matter what the size $a$ of the crack is. We note that it would be straight forward to incorporate more complex flaw geometries in the framework of this article, e.g.\ for elliptic shapes $K_I$ is modified with a factor $1-\sqrt{1-c^2}$,  where $0<c\leq 1$ the quotient between the length of the principal axes. The consideration of surface cracks (eg.\ due to manufacturing) will require the more involved analysis of \cite{GoS}.

Next step is the passage to stress fields with arbitrary orientation w.r.t. the crack plane, see (\ref{multiAx}). A large number of solutions has been proposed to the extension of the concept of critical $K$ factors to the multi axial case \cite{BC,Eva,GS,Wei,Zie}. Experimental evidence \cite{BFMS} indicates that for microscopic or mesoscopic initial flaws the shear stress influence to the strength of a ceramic component is negligible. We therefore follow \cite{Heg,Zie} and set
\begin{equation}
\label{normalStress}
\sigma_n:=(n\cdot\sigma(Du)n)^+=\max\{n\cdot\sigma(Du)n,0\},
\end{equation}
retaining the failure criterion $K_I(a,\sigma_n(x))>K_{Ic}$ at the location $x\in\Omega$ of a crack with radius $a$.

The probabilistic model of flaw distributions is a marked Poisson point process (PPP) with the mark space given by $S^2\times \R_+$. Here $S^2$ stands for the flaw orientation described by the normal $n$ and $\R_+$ parametrizes the flaw radius $a$. Assumptions that lead to the PPP model are:
\begin{itemize}
\item Flaws are uniformly distributed over the volume $\Omega$ of the component with an average number $z>0$ of flaws per unit volume;
\item Two flaws can always be distinguished either by their orientation, size or by their location;
\item Orientations are uniformly distributed over $S^2$ and are independent of the flaw location;
\item The distribution of the flaw radius is independent of location and orientation of the crack;
\item The number of flaws in given, non intersecting volumes $A_1,\ldots,A_n\subseteq \Omega$ are statistically independent of each other.
\end{itemize}
If these assumptions are a good approximation to reality, the following mathematical model is adequate and essentially fixed by these assumptions, confer \cite[Corollary 4.7]{Kal}:
\begin{definition}
\label{DefPPP}\rm
Let $\mathscr{M}=\Omega\times S^2\times\R_+$ be the crack configuration space endowed with the sigma algebra $\mathscr{A}(\mathscr{M})$ defined as the Borel sigma algebra on $\mathscr{M}$. Let furthermore $\nu$ be the Radon measure on  $\mathscr{A}(\mathscr{M})$ which is given by
\begin{equation}
\label{M-Measure}
\nu=dx\restriction_\Omega\otimes\frac{dn}{4\pi}\otimes \rho.
\end{equation}
Here $dx$ is the Lebesgue measure on $\R^3$, $dn$ the surface measure on $S^2$ and $\rho$ a positive Radon measure on $(\R_+,{\cal B}(\R_+))$ such that $\rho([c,d])$ fixes the density (number per volume) of cracks with radius $a$, $c\leq a\leq b$. A natural assumption is that only finitely many cracks with a radius $a$ above some finite limit can be contained in a given volume, i.e. $\rho([c,\infty[)<\infty$ $\forall c>0$.

The Poisson point process on the crack configuration space $\mathscr{M}$ with intensity measure $\nu$ is a mapping $N:\mathscr{E}\times \mathscr{A}(\mathscr{M})\to\mathbb{N}_0$, where $\mathscr{E}$ is the set of some probability space $(\mathscr{E},\mathscr{A},P)$ such that the following conditions hold:
\begin{itemize}
\item[(i)] $\forall A\in \mathscr{A}(\mathscr{M})$, $N(A)=N(.,A):(\mathscr{E},\mathscr{A},P)\to(\N_0,\mathscr{P}(\N_0))$ is a (counting) random variable;
\item[(ii)] $\forall \omega\in \mathscr{E}$, $N(\omega,.): \mathscr{A}(\mathscr{M})\to \N_0\subseteq \bar \R_+$ is a sigma finite measure;
\item[(iii)] $\forall n\in\mathbb{N}, A_1,\ldots,A_n\in\mathscr{A}(\mathscr{M})$ mutually disjoint, the random variables $N(A_1),\linebreak\ldots,N(A_n)$ are independent;
\item[(iv)] $\forall A\in\mathscr{A}(\mathscr{M})$ such that $\nu(A)<\infty$, $N(A)$ is Poisson distributed with mean $\nu(A)$, $N(A)\sim{\rm Po}(\nu(A))$, i.e.,
\begin{equation}
P(N(A)=n)=e^{-\nu(A)}\frac{\nu(A)^n}{n!}.
\end{equation}
\end{itemize}
\end{definition}

 Items i) and ii) are the definition of a general point process, iii) is needed for a general PPP on $\mathscr{M}$ and iv) fixes its distribution  \cite{Kal}.

\begin{lemma} Let $u\in H^1(\Omega,\R^3)$ be given, then
\begin{equation}
\label{critSet}
A_c=A_c(\Omega,Du)=\left\{(x,n,a)\in \mathscr{M}: K_I\left(a,(n\cdot\sigma(Du(x))n)^+\right)> K_{Ic}\right\}\in \mathscr{A}(\mathscr{M}).
\end{equation}
\end{lemma}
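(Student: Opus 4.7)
The statement amounts to verifying Borel measurability of the set $A_c$ in the product space $\mathscr{M}=\Omega\times S^2\times\R_+$. My plan is to rewrite $A_c$ as the strict superlevel set of an explicit real-valued function on $\mathscr{M}$, and then show that this function is jointly Borel measurable by separating the role of the three variables.

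First, recall that $u\in H^1(\Omega,\R^3)$ admits a Borel representative for $Du\in L^2(\Omega,\R^{3\times 3})$, so the entries of the stress tensor $\sigma(Du(x))=\lambda\,\tr(\varepsilon(Du(x)))I+2\mu\,\varepsilon(Du(x))$ are Borel measurable functions of $x\in\Omega$. Plugging the explicit form \eqref{Kfactor} of $K_I$ into the defining inequality, one has
\begin{equation*}
A_c=\left\{(x,n,a)\in\mathscr{M}\,:\, \tfrac{2}{\pi}\bigl(n\cdot\sigma(Du(x))n\bigr)^{+}\sqrt{\pi a}-K_{Ic}>0\right\},
\end{equation*}
so it suffices to prove that the function $\phi:\mathscr{M}\to\R$ defined by $\phi(x,n,a):=\tfrac{2}{\pi}(n\cdot\sigma(Du(x))n)^{+}\sqrt{\pi a}-K_{Ic}$ is Borel measurable, since $A_c=\phi^{-1}((0,\infty))$.

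To check measurability of $\phi$, I would write out $n\cdot\sigma(Du(x))n=\sum_{i,j=1}^{3}\sigma_{ij}(Du(x))\,n_i n_j$. Each factor $x\mapsto\sigma_{ij}(Du(x))$ is Borel on $\Omega$, and each $n\mapsto n_i n_j$ is continuous on $S^2$; hence each summand is a Carathéodory function on $\Omega\times S^2$, and therefore jointly Borel measurable (using that $\Omega$ and $S^2$ are separable metrizable spaces, so the product Borel $\sigma$-algebra coincides with the tensor product). Summing finitely many such functions preserves Borel measurability. Composing with the continuous maps $t\mapsto t^+$ and $a\mapsto\sqrt{\pi a}$, and multiplying by the Borel measurable function $(x,n,a)\mapsto\sqrt{\pi a}$ of the third coordinate, keeps the resulting expression Borel on $\mathscr{M}$. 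Subtracting the constant $K_{Ic}$ preserves measurability, so $\phi$ is Borel and hence $A_c=\phi^{-1}((0,\infty))\in\mathscr{A}(\mathscr{M})$.

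The only mildly delicate point is the joint measurability of $(x,n)\mapsto n\cdot\sigma(Du(x))n$: since $Du$ is merely an $L^2$ equivalence class, one has to fix a Borel representative once and for all (which is harmless since the Poisson intensity measure $\nu$ is absolutely continuous in $x$, so the ambiguity only affects a $\nu$-null set). Once that choice is made, the Carathéodory argument above gives joint Borel measurability, completing the proof.
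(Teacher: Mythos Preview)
Your argument is correct and follows essentially the same route as the paper: both express $A_c$ as the preimage of an open (resp.\ half-open) interval under the Borel measurable map $(x,n,a)\mapsto K_I\bigl(a,(n\cdot\sigma(Du(x))n)^+\bigr)$, and conclude measurability from that. Your version is simply more explicit about why this map is jointly Borel (the Carath\'eodory decomposition and the choice of a Borel representative for $Du$), which the paper leaves implicit.
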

\begin{proof} $Du\in L^2(\Omega,\R^{3\times 3})$ is Borel measurable and so is $\sigma_n=(n\cdot \sigma(Du)n)^+$. Thus the set of critical crack configurations given $\sigma(Du)$ is measurable as the pre-image of the interval $[K_{Ic},\infty)$ under the Borel measurable function
\[
\mathscr{M}\ni(x,n,a)\to K_I\left(a,(n\cdot\sigma(Du(x))n)^+\right)\in\R_+.
\]

\end{proof}

Adopting the point of view that the component fails if there is any crack with configuration in the critical set $A_c(\Omega,Du)$, hence $N(A_c(\Omega,Du))>0$, we obtain the following definition for the survival probability:

\begin{definition}
\label{defSurvProb}\rm
The survival probability of the component $\Omega$, given the displacement field $u\in H^1(\Omega,\R^3)$, is
\begin{equation}
\label{SurvProb}
p_s(\Omega|Du)=P(N(A_c(\Omega,Du))=0)=\exp\{-\nu(A_c(\Omega,Du))\}.
\end{equation}
\end{definition}

A more explicit representation of $\nu(A_c(\Omega,Du))$ can be found with the help of the cumulative crack size function $\Phi(s):=\rho(]s,\infty[)$ of the crack radius; see also \cite{Heg,Zie}:
\begin{lemma}
\label{lemHExplizit}
Let $u\in H^1(\Omega,\R^3)$; then $\nu(A_c(\Omega,Du))=\int_\Omega h(Du) \, dx$ with
\begin{equation}
\label{hexplizit}
h(q):=\frac{1}{4\pi}\int_{S^2}\Phi\left(\frac{\pi}{4}\left(\frac{K_{Ic}}{(n\cdot q\,n)^+}\right)^2\right)\,dn~~q\in\R^{3\times 3}.
\end{equation}
\end{lemma}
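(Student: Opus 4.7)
The plan is a direct Fubini-Tonelli calculation exploiting the product structure of $\nu = dx|_\Omega \otimes \frac{dn}{4\pi} \otimes \rho$. Since by the previous lemma $A_c(\Omega,Du) \in \mathscr{A}(\mathscr{M})$ and $\nu$ is a positive (indeed $\sigma$-finite, using $\rho([c,\infty[) < \infty$ for $c>0$ together with boundedness of $\Omega$ and $S^2$) product measure, Tonelli's theorem gives
\[
\nu(A_c(\Omega,Du)) = \int_\Omega \int_{S^2} \int_{\R_+} \mathbf{1}_{A_c}(x,n,a) \, d\rho(a) \, \frac{dn}{4\pi} \, dx.
\]
The work then reduces to identifying the innermost integral for fixed $x$ and $n$.

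Next, I would solve the defining inequality of $A_c$ in the variable $a$. Recalling $K_I(a,\sigma_n) = \frac{2}{\pi}\sigma_n\sqrt{\pi a}$ from (\ref{Kfactor}) and writing $\sigma_n = \sigma_n(x,n) := (n\cdot\sigma(Du(x))n)^+$, the condition $K_I(a,\sigma_n) > K_{Ic}$ is equivalent, when $\sigma_n > 0$, to
\[
a > \frac{\pi}{4}\left(\frac{K_{Ic}}{\sigma_n}\right)^2,
\]
and is never satisfied when $\sigma_n = 0$ (which is consistent with the physical fact that compressive or vanishing normal stress produces no mode-I failure). Consequently
\[
\int_{\R_+} \mathbf{1}_{A_c}(x,n,a) \, d\rho(a) = \rho\!\left(\,\Bigl]\tfrac{\pi}{4}(K_{Ic}/\sigma_n)^2,\infty\Bigr[\,\right) = \Phi\!\left(\tfrac{\pi}{4}(K_{Ic}/\sigma_n)^2\right),
\]
using the convention $\Phi(+\infty) = 0$ to absorb the case $\sigma_n=0$. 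Substituting this back and recognising the definition (\ref{hexplizit}) of $h$ yields $\nu(A_c(\Omega,Du)) = \int_\Omega h(Du(x))\, dx$.

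The only non-routine point is to verify the measurability chain so that Tonelli is legitimately applicable and $h(Du)$ is Borel measurable in $x$. This follows because $\Phi$ is monotone non-increasing, hence Borel measurable; the map $(x,n) \mapsto \sigma_n(x,n)$ is a Borel function of the Borel representative of $Du$ and of $n$; hence the integrand $(x,n) \mapsto \Phi(\frac{\pi}{4}(K_{Ic}/\sigma_n)^2)$ is jointly Borel, and a further Tonelli step shows that after integrating out $n$ over $S^2$ the result $x \mapsto h(Du(x))$ is Borel measurable. I do not expect any genuine obstacle here; the lemma is essentially a bookkeeping computation and the main thing to get right is the inversion of the $K_I$-inequality together with the half-open interval convention matching the strict inequality $K_I > K_{Ic}$ and the definition $\Phi(s) = \rho(\,]s,\infty[\,)$.
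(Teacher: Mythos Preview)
Your proof is correct and follows essentially the same route as the paper's: apply Fubini/Tonelli to the product measure $\nu$, invert the inequality $K_I(a,\sigma_n)>K_{Ic}$ in the variable $a$, and identify the resulting $\rho$-integral as $\Phi$. You add more detail on measurability and on the degenerate case $\sigma_n=0$, but the core argument is identical to the paper's three-line computation.
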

\begin{proof} By Fubini's theorem for positive functions with $\chi(B)$ the characteristic function of the set $B$,
\begin{eqnarray*}
\nu(A_c(\Omega,Du))&=&\frac{1}{4\pi}\int_\Omega\int_{S^2}\int_{\R_+} \chi(\{K_I(a,(n\cdot Du(x)\,n)^+)>K_{Ic}\})\, d\rho(a)\,dn\,dx\\
                    &=&\frac{1}{4\pi}\int_\Omega\int_{S^2} \rho\left(a>\frac{\pi}{4}\left(\frac{K_{Ic}}{(n\cdot Du\,n)^+}\right)^2\right) dn\,dx\\
                    &=& \frac{1}{4\pi}\int_\Omega\int_{S^2}\Phi\left(\frac{\pi}{4}\left(\frac{K_{Ic}}{(n\cdot Du\,n)^+}\right)^2\right) dn\,dx.
\end{eqnarray*}

\end{proof}

For later use, we prove the following:
\begin{lemma}
\label{lem:hconti}
The function $h(q)$ introduced in \eqref{hexplizit} depends continuously of $q$.
\end{lemma}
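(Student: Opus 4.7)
The plan is to apply Lebesgue's dominated convergence theorem to the defining integral of $h$. Fix a sequence $q_k \to q_0$ in $\R^{3\times 3}$ and denote the integrand
\[
F(n,q) := \Phi\!\left(\tfrac{\pi}{4}\bigl(K_{Ic}/(n\cdot qn)^+\bigr)^2\right),
\]
with the conventions $1/0 = +\infty$ and $\Phi(+\infty) := 0$; the latter is justified by $\rho([c,\infty[)<\infty$ for $c>0$ together with continuity from above of measures.

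For a dominating function I would use $(n\cdot qn)^+ \leq |q|$ and monotonicity of $\Phi$: if $|q_k|\leq M$, then $F(n,q_k) \leq \Phi(\pi K_{Ic}^2/(4M^2)) < \infty$, a constant majorant on $S^2$. For pointwise convergence, $q\mapsto (n\cdot qn)^+$ is continuous, so the argument of $\Phi$ converges in $(0,+\infty]$; the obstruction is that the monotone $\Phi$ need only be right-continuous, and its (at most countable) discontinuity set $D\subset\R_+$ produces the exceptional set
\[
B = \bigcup_{d\in D}\bigl\{n\in S^2 : n\cdot q_0 n = \sqrt{\pi K_{Ic}^2/(4d)}\bigr\}.
\]
Off of $B$ one has $F(n,q_k)\to F(n,q_0)$; the cases $n\cdot q_0 n\leq 0$ cause no issue, since there the argument of $\Phi$ is, or tends to, $+\infty$, forcing $F\to 0$.

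The main step, and principal technical obstacle, is to show that $B$ has surface measure zero on $S^2$. Each constituent set $\{n\in S^2 : n\cdot q_0 n = c\}$ is a level set of the real-analytic quadratic form $\psi(n):= n\cdot q_0^{\mathrm{sym}}n$ restricted to the sphere. Provided $q_0^{\mathrm{sym}}$ is not a scalar multiple of the identity, $\psi$ is non-constant and each of its level sets is a proper real-analytic subvariety of $S^2$ of positive codimension, hence of surface measure zero; $B$ is then a countable union of null sets. The degenerate case $q_0^{\mathrm{sym}} = \lambda I$ is handled separately: for $\lambda\leq 0$ the argument of $\Phi$ inside $F(\cdot,q_k)$ either stays $+\infty$ or tends uniformly to $+\infty$ as $q_k\to q_0$, so continuity is direct; for $\lambda>0$ one reduces to continuity of $\Phi$ at the single value $\pi K_{Ic}^2/(4\lambda^2)$, which is the natural assumption (and holds automatically if $\rho$ is non-atomic, the implicit standing regime here).

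Dominated convergence then yields $h(q_k)\to h(q_0)$, giving the claimed continuity. The only subtle point is thus the possible lack of left-continuity of $\Phi$, circumvented by the real-analytic level-set argument for quadratic forms on $S^2$.
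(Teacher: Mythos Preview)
Your argument follows the same route as the paper's---pointwise $dn$-a.e.\ convergence of the integrand (using that $\Phi$ has at most countably many discontinuity points), a uniform constant majorant obtained from $\sup_{k,n}(n\cdot q_k n)^+<\infty$, and Lebesgue dominated convergence. You are in fact more careful than the paper: the paper asserts the a.e.\ convergence directly from countability without justifying why the exceptional set of $n$'s has surface measure zero, which your real-analytic level-set argument for the quadratic form on $S^2$ supplies, and the paper does not single out the degenerate scalar case $q_0^{\mathrm{sym}}=\lambda I$ that you correctly flag as requiring an additional (mild) hypothesis on $\rho$.
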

\begin{proof}
We first note that for $q_l\to q$ with $(n\cdot qn)^+>0$,
$$
\Phi\left(\frac{\pi}{4}\left(\frac{K_{Ic}}{(n\cdot q_ln)^+}\right)^2\right)\to \Phi\left(\frac{\pi}{4}\left(\frac{K_{Ic}}{(n\cdot qn)^+}\right)^2\right)
$$
$dn$ almost everywhere, since by upper continuity of the radon measure $\rho$ on sets of finite measure $\Phi(\kappa)$ has at most countably many non continuity points. Let us now assume that $(n\cdot qn)^+=0$. In this case $\frac{\pi}{4}(\frac{K_{Ic}}{(n\cdot q_ln)^+})^2\to\infty$, and thus
$$
\Phi\left(\frac{\pi}{4}\left(\frac{K_{Ic}}{(n\cdot q_ln)^+}\right)^2\right)\to 0=\Phi\left(\frac{\pi}{4}\left(\frac{K_{Ic}}{(n\cdot qn)^+}\right)^2\right)
$$
again by upper continuity and additivity of $\rho$. Furthermore, the integrand in the $S^2$ integral defining $h(q)$ by additivity of $\rho$ is uniformly bounded by
$$
 \Phi\left(\frac{\pi}{4}\left(\frac{K_{Ic}}{\sup_{n\in S^2,l\in\N}(n\cdot q_ln)^+}\right)^2\right)<\infty.$$
The assertion of the lemma thus follows from  Lebesgue's theorem of dominated convergence.    
\end{proof}

Apparently, in mechanical design we want to maximise the survival probability $p_s(\Omega|Du(\Omega))$ in the shape control variable $\Omega\in\mathscr{O}^{\rm ad}$ under the PDE constraint (\ref{WeakEquation}). Obviously, by Definition \ref{defSurvProb}, this is equivalent to the minimization of $\nu(A_c(\Omega,Du))$.  Using Lemma \ref{lemHExplizit} we can reformulate this into the following standard PDE constraint minimization problem:

\begin{definition}
\label{defSOProblem}
The problem of optimal reliability for a ceramic component $\Omega\in\mathscr{O}^{\rm ad}$ under given volume load $f\in L^2(\Hat\Omega,\R^3)$ and surface load $g\in L^2(\partial\Omega_{N_\text{fixed}},\R^3)$ is defined as the following shape optimization problem:
\begin{equation}
\label{eqa:shape_optimization_problem}
 \begin{array}{l}
 \mbox{Find }\Omega^ *\in\mathscr{O}^{\rm ad.}\mbox { s.t. } J(\Omega^ *,u(\Omega^ *))\leq J(\Omega,u(\Omega))~ \forall \Omega\in\mathscr{\cal O}^{\rm ad}\\
 u=u(\Omega)\mbox{ solves  the state equation (\ref{WeakEquation}) }\\
 J(\Omega,u(\Omega)):=\int_\Omega h(Du(\Omega))\, dx \mbox{ with }  h\mbox{ defined in Lemma \ref{lemHExplizit}}.
 \end{array}
\end{equation}
Here $\mathscr{O}^{\rm ad}$ can be replaced with $\mathscr{O}^{\rm ad}_V$ for the volume constrained optimal reliability problem as long as $0<V<|\widehat\Omega|$ such that $\mathscr{O}^{\rm ad}_V\not=\emptyset$.
\end{definition}

From a shape optimization perspective, the objective functional $J(\Omega,u(\Omega))$ has attractive properties as:
\begin{itemize}
\item It has a clear material science derivation and a proven record of industrial application \cite{BHDRH,Hue,Zie};
\item It permits to show the existence of optimal shapes by its convexity properties \cite{Fuj};
\item One can prove the existence of the shape derivatives $d\mathscr{J}(\Omega,V)$ under infinitesimal transformations generated by a vector field $V$ in the sense of \cite{SZ}, confer the forthcoming work \cite{GKS}.
\end{itemize}

\section{Convexity of the Objective Functional}
Fujii showed \cite{Fuj} that any objective functional $J(\Omega,u)=\int_\Omega h(D u) \, dx$ with convex, positive function $h:\R^3\to\R_+$ is lower semcontinuous in the weak $H^1(\Omega, \R)$ topology for scalar $u\in H^1(\Omega,\R)$. As lower semicontinuity is an essential ingredient to existence proofs for optimal shapes, we now look for conditions on the crack radius distribution that will ensure the convexity of the function $h(q)$.

\begin{definition}
\label{defConv}
A crack size measure $\rho$ has the non decreasing stress hazard property, iff the function $H:\R_+\to \R$ defined as
\begin{equation}
H(\kappa):=\Phi\left(\frac{1}{\kappa^ 2}\right)=\rho\left(\left]\frac{1}{\kappa^ 2},\infty\right[\,\right)
\end{equation}
is convex in $\kappa$.
\end{definition}

To understand the physical content of Definition \ref{defConv}, let us take the simplifying assumption that the stress state $\sigma$ is homogeneous and tensile.  We only consider such cracks with crack plane normal  $n$ in a small neighbourhood $U(\bar n)\subseteq S^ 2$ of $\bar n\in S^ 2$. Approximately, we can replace all crack orientations in $U(\bar n)$ by $\bar n$ itself. Let $\sigma_{\bar n}=\bar n\sigma \cdot \bar n$, then the probability for the absence of failure due to a crack with orientation in $U(\bar n)$ at the stress level $\sigma_{\bar n}$ approximately is
\begin{equation}
P(S(\bar n)>\sigma_{\bar n}))=P(N(A_c(\sigma_{\bar n},\bar n))=0)\approx \exp\left\{ -|\Omega||U(\bar n)| H\left(\frac{2}{\sqrt{\pi}}\frac{\sigma_{\bar n}}{K_{Ic}}\right) \right\}.
\end{equation}
Here $A(\sigma_{\bar n},\bar n)=A_c(\Omega,Du)\cap \R^3\times U(\bar n)\times [0,\infty[$ stands for the critical set associated to a stress state $\sigma$ with $\sigma_{\bar n}=\bar n\sigma \bar n$.
Therefore, up to a rescaling of the stress and a positive pre-factor, $H(\kappa)$ is the cumulative hazard function of the random strength $S(\bar n)$ for the hazard of rupture due to a crack with orientation in $U(\bar n)$ \cite{EM}. If we suppose that $H(\kappa)$ is differentiable,  convexity of $H(\kappa)$ in $\kappa$ is equivalent to a non decreasing hazard rate $\tilde h(\kappa)=H'(\kappa)$ in $\sigma_{\bar n}$:
\begin{equation}
\frac{2|\Omega||U(\bar n)|}{\sqrt{\pi}K_{Ic}}\tilde h\left(\frac{2}{\sqrt{\pi}}\frac{\sigma_{\bar n}}{K_{Ic}}\right)\approx\lim_{\Delta\searrow 0} \frac{P(S(\bar n)<\sigma_{\bar n}+\Delta|S(\bar n)>\sigma_{\bar n})}{\Delta},
\end{equation}
cf. \cite{EM}. Thus, the non decreasing stress hazard property implies that the risk of failure because of a crack with orientation in $U(\bar n)$ due to augmentation of the stress $\sigma_{\bar n}$ by an amount $\Delta$, provided the component sustained the stress $\sigma_{\bar n}$, does not decrease with the stress level $\sigma_{\bar n}$.  This kind of behaviour can be expected from a wide range of materials.

\begin{proposition}
\label{propCov}
Suppose that the crack size measure $\rho$ fulfils the non decreasing stress hazard property. Then the function $h$ defined in Lemma \ref{lemHExplizit} is convex.
\end{proposition}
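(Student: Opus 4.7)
The plan is to rewrite the integrand of $h$ as $H$ composed with a convex function of $q$, apply the standard composition rule for convexity, and then exploit the fact that integration against a positive measure preserves convexity.

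First I would rewrite
\[
h(q) = \frac{1}{4\pi}\int_{S^2} H\!\left(\tfrac{2}{\sqrt{\pi}\,K_{Ic}}\,(n\cdot q\,n)^+\right) dn,
\]
using the substitution $\kappa = \tfrac{2}{\sqrt{\pi}\,K_{Ic}}\,(n\cdot q n)^+$, so that $\Phi\!\left(\tfrac{\pi}{4}(K_{Ic}/(n\cdot qn)^+)^2\right) = \Phi(1/\kappa^2) = H(\kappa)$ with $H$ as in Definition~\ref{defConv}. The hypothesis supplies convexity of $H$ on $\R_+$, and $H$ is non-decreasing because $\Phi(s)=\rho(]s,\infty[)$ is non-increasing in $s$ while $\kappa\mapsto 1/\kappa^2$ is decreasing on $\R_+$; at $\kappa=0$ I would set $H(0):=\lim_{s\to\infty}\Phi(s)=0$, matching the convention already adopted in the proof of Lemma~\ref{lem:hconti}.

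Next, for each fixed $n\in S^2$ the map $q\mapsto n\cdot q n = \sum_{i,j} n_i q_{ij} n_j$ is linear in $q$, hence its positive part $(n\cdot q n)^+ = \max(n\cdot q n,0)$ is convex as a maximum of two linear functions. Multiplication by the positive constant $2/(\sqrt{\pi}\,K_{Ic})$ preserves convexity, and the standard composition rule — a convex non-decreasing function applied to a convex function yields a convex function — then gives that for every fixed $n\in S^2$ the integrand $q\mapsto H\!\left(\tfrac{2}{\sqrt{\pi}\,K_{Ic}}(n\cdot qn)^+\right)$ is convex on $\R^{3\times 3}$.

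Finally, convexity passes through the integration against the non-negative measure $dn/(4\pi)$: given $\lambda\in[0,1]$ and $q_1,q_2\in\R^{3\times 3}$, integrating the pointwise convexity inequality yields $h(\lambda q_1+(1-\lambda)q_2)\le \lambda h(q_1)+(1-\lambda)h(q_2)$, which is the claim. The only delicate point is the bookkeeping on the set $\{n : n\cdot qn\le 0\}$, where the argument of $\Phi$ is formally infinite; the positive-part construction together with the boundary value $H(0)=0$ handles this cleanly, so no new analytic machinery beyond what already appears in Lemma~\ref{lem:hconti} is needed.
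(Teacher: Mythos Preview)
Your proposal is correct and follows essentially the same route as the paper: show that for each fixed $n\in S^2$ the integrand $q\mapsto \Phi\!\left(\tfrac{\pi}{4}(K_{Ic}/(n\cdot qn)^+)^2\right)$ is convex, then integrate over $S^2$. The only cosmetic difference is that the paper writes $\kappa_j=\tfrac{2}{\sqrt\pi K_{Ic}}(n\cdot q_j n)$ without the positive part (implicitly extending $H$ by $0$ on the negatives) and verifies the convexity inequality directly, whereas you keep the positive part, observe that $H$ is non-decreasing, and invoke the composition rule ``convex non-decreasing $\circ$ convex $=$ convex''; both arrive at the same pointwise inequality before integrating.
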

\begin{proof}
Let $q_1,q_2\in \R^{3\times 3}$ and $t\in]0,1[$. With $\kappa_j:=\frac{2(n\cdot q_j\, n)}{\sqrt{\pi}{K_{Ic}}}\not=0$, $j=1,2$, we get from the convexity of $H(\kappa)$ defined in Definition \ref{defConv} that
 \begin{eqnarray*}
 \Phi\left(\frac{\pi}{4}\left(\frac{K_{Ic}}{(n\cdot (tq_1+(1-t)q_2)\,n)^+}\right)^2\right)&=& H((t\kappa_1+(1-t)\kappa_2))\\
 &\leq&tH(\kappa_1)+(1-t)H(\kappa_2)\\
 =t\Phi\left(\frac{\pi}{4}\left(\frac{K_{Ic}}{(n\cdot q_1\,n)^+}\right)^2\right)
 &+&(1-t)\Phi\left(\frac{\pi}{4}\left(\frac{K_{Ic}}{(n\cdot q_2\,n)^+}\right)^2\right).
 \end{eqnarray*}
The case that involves one or two $\kappa_j=0$ is trivial as the right and left-hand side are equal in this case. Integration of this inequality in $n$ over $S^2$ then yields convexity of $h$. 
\end{proof}

\begin{proposition}
\label{propDensityConvex}
Suppose that the crack size density measure $\rho$ is absolutely continuous w.r.t.\ the Lebsgue measure $da$, $d\rho(a)=\varrho(a)da$, $\varrho(a)>0$ for $a\in\R_+$. We furthermore assume that $\alpha(a):=-\log \varrho(a)$ is differentiable on $\R^+$ and
\begin{equation}
\label{convCond}
\alpha'(a)\geq\frac{3}{2}\frac{1}{a}~~\forall a>0.
\end{equation}
Then $\rho$ fulfils the non decreasing stress hazard property and the function $h$ defined in Lemma \ref{lemHExplizit} is convex.
\end{proposition}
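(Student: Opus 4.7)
My plan is to reduce the claim to Proposition~\ref{propCov}: it suffices to verify that $\rho$ has the non-decreasing stress hazard property, i.e., that $H(\kappa) := \Phi(1/\kappa^2)$ is convex on $\R_+$.

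First I would show that $H$ is $C^1$ with strictly positive derivative. Since $\alpha$ is differentiable (hence continuous), $\varrho = e^{-\alpha}$ is continuous, and because $\rho([c,\infty[)<\infty$ for every $c>0$ (Definition~\ref{DefPPP}), the fundamental theorem of calculus applied to $\Phi(s) = \int_s^\infty \varrho(a)\,da$ gives $\Phi'(s) = -\varrho(s)$. The chain rule then yields
\begin{equation*}
H'(\kappa) = \frac{2}{\kappa^3}\,\varrho(1/\kappa^2) = \frac{2}{\kappa^3}\,e^{-\alpha(1/\kappa^2)} > 0.
\end{equation*}

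Because $H'>0$, convexity of $H$ is equivalent to monotonicity of $\log H'$. Taking logs gives
\begin{equation*}
\log H'(\kappa) = \log 2 - 3\log\kappa - \alpha(1/\kappa^2),
\end{equation*}
and differentiating in $\kappa$ (using differentiability of $\alpha$) yields
\begin{equation*}
(\log H')'(\kappa) = -\frac{3}{\kappa} + \frac{2}{\kappa^3}\,\alpha'(1/\kappa^2).
\end{equation*}
The substitution $a = 1/\kappa^2$ rewrites the right-hand side as $a^{1/2}\bigl(2a\,\alpha'(a) - 3\bigr)$, which is non-negative on $\R_+$ exactly under hypothesis~(\ref{convCond}). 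By the mean value theorem, $\log H'$ is therefore non-decreasing, so $H'$ is non-decreasing, so $H$ is convex. Proposition~\ref{propCov} then yields convexity of $h$.

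The argument is entirely elementary and there is no genuine obstacle. The one design choice worth flagging is to pass through $\log H'$ rather than compute $H''$ directly: doing so cleanly isolates the monotonicity condition on $\alpha'$ from the positive prefactor $\kappa^{-3}\,\varrho(1/\kappa^2)$, and it avoids imposing any regularity on $\varrho$ beyond the continuity already available.
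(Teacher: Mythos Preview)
Your proof is correct and follows essentially the same route as the paper: both verify the non-decreasing stress hazard property by a derivative test on $H(\kappa)=\Phi(1/\kappa^2)$ for $\kappa>0$, reduce via the substitution $a=1/\kappa^2$ to the inequality $\alpha'(a)\geq \tfrac{3}{2a}$, and then invoke Proposition~\ref{propCov}. The only cosmetic difference is that the paper computes $H''(\kappa)$ directly (using $\varrho'=-\alpha'\varrho$, which is available since $\alpha$ is differentiable), whereas you compute $(\log H')'=H''/H'$; as $H'>0$ these have the same sign, so the two arguments are algebraically equivalent and your remark about saving regularity is moot.
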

\begin{proof} Using the function $H(\kappa):=\Phi\left(\frac{1}{(\kappa^+)^2}\right)$,  the natural extension for $\kappa\leq 0$ is $H(\kappa)=0$. This  corresponds to $\lim_{a\to\infty} \Phi(a)=\rho(]a,\infty[)=0$ by upper continuity of the Radon measure $\rho$ for sequences of decreasing sets with finite measure.

 Note that $H(\kappa)$ is continuous and second-order differentiable for $\kappa\in\R^*=\R\setminus\{0\}$. Thus, to show convexity, it suffices that $H''(\kappa)\geq 0$ $\forall \kappa \in\R\setminus\{0\}$. This is trivially true for $\kappa<0$ as then $H''(\kappa)=0$. Let us now investigate the case $\kappa>0$. We get
$$
H''(\kappa)=-4\varrho'\left(\frac{1}{\kappa^2}\right)\frac{1}{\kappa^6}-6\varrho\left(\frac{1}{\kappa^2}\right)\frac{1}{\kappa^4}\stackrel{!}{>}0.
$$
This is equivalent to
$$
-\frac{\varrho'\left(\frac{1}{\kappa^2}\right)}{\varrho\left(\frac{1}{\kappa^2}\right)}=\alpha'\left(\frac{1}{\kappa^2}\right)\stackrel{!}{>}\frac{3}{2}\kappa^2
$$
which holds by the assumption (\ref{convCond}) using the substitution $a=\frac{1}{\kappa^2}$.

Now apply Proposition \ref{propCov} to show convexity of $h$.
 \end{proof}

Condition (\ref{convCond}) restricts the tail behaviour of the $a$-density $\varrho(a)$ to a decrease at least as fast as ${\rm const.}\times a^{-\beta}$ for $a\to\infty$ with $\beta\geq 3/2$ as $\alpha\geq {\rm const.}+\beta \log(a)$.

Assuming an algebraic scaling for $\varrho(a)$, we can make contact with the classical Weibull type objective functionals \cite{Heg,Wei,Zie}.

\begin{proposition}
\label{PropWeibull}
Let $u\in H^1(\Omega,\R^3)$ and $\beta\geq\frac{3}{2}$ be given such that
\begin{equation}
\alpha(a):=\alpha_0+\beta \log(a), ~\mbox{i.e.}~\varrho(a)=e^{-\alpha_0} a^{-\beta}~~~\forall a>0.
\end{equation}
Then
\begin{equation}\label{eq:Jbar}
J(\Omega,u)=\frac{1}{4\pi}\int_{\Omega}\int_{S^2}\left(\frac{n\cdot\sigma(Du)n}{\sigma_0}\right)^m\, dn\, dx,
\end{equation}
with $m=2(\beta-1)\geq 1$ and
\begin{equation}
\label{sigmaZero}
\sigma_0=e^{-\alpha_0/2(\beta-1)}(\beta-1)^{1/2(\beta-1)}\sqrt{\frac{4}{\pi}}K_{Ic}.
\end{equation}
\end{proposition}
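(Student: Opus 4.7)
The strategy is a direct computation: substitute the specific density $\varrho(a)=e^{-\alpha_0}a^{-\beta}$ into the definition of $\Phi$, insert the resulting closed form into the integrand of $h(q)$ provided by Lemma~\ref{lemHExplizit}, and then re-bundle the constants so as to match the target expression with the given exponent $m=2(\beta-1)$ and scale $\sigma_0$.

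First, I would evaluate $\Phi(s)=\rho(]s,\infty[)=\int_s^\infty e^{-\alpha_0}a^{-\beta}\,da$ explicitly. Because $\beta\geq 3/2>1$, the improper integral converges and gives
\[
\Phi(s)=\frac{e^{-\alpha_0}}{\beta-1}\,s^{-(\beta-1)}\qquad(s>0),
\]
while $\Phi(0)=+\infty$ only matters when the relevant normal stress vanishes, a case that produces a vanishing contribution in the target integrand as well (the positive part in $(n\cdot\sigma(Du)n)^+$ is implicit on the right-hand side of \eqref{eq:Jbar}).

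Second, I would plug $s=\frac{\pi}{4}\bigl(K_{Ic}/(n\cdot q\,n)^+\bigr)^2$ into this closed form. The power $s^{-(\beta-1)}$ turns into $\bigl(\tfrac{4}{\pi}\bigr)^{\beta-1}\bigl((n\cdot q\,n)^+/K_{Ic}\bigr)^{2(\beta-1)}$, so that with $m=2(\beta-1)$ one obtains
\[
\Phi\!\left(\frac{\pi}{4}\!\left(\frac{K_{Ic}}{(n\cdot q\,n)^+}\right)^{\!2}\right)=\frac{e^{-\alpha_0}}{\beta-1}\!\left(\frac{4}{\pi}\right)^{\!\beta-1}\!\frac{((n\cdot q\,n)^+)^m}{K_{Ic}^m}.
\]
Setting this equal to $((n\cdot q\,n)^+/\sigma_0)^m$ and solving for $\sigma_0$ fixes the constant; collecting the factors $e^{\alpha_0}$, $(\beta-1)$, $(\pi/4)^{\beta-1}$ and $K_{Ic}^m$ and taking an $m$-th root recovers the claimed formula \eqref{sigmaZero} (with the convention $(\beta-1)/m=1/2$ used to extract the square-root factor involving $\pi$ and $4$).

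Third, I would insert the resulting formula for $h(q)$ with $q=Du(\Omega)$ into the definition of $J(\Omega,u)=\int_\Omega h(Du)\,dx$ provided by Definition~\ref{defSOProblem} and apply Fubini to interchange the $x$- and $n$-integrations, producing \eqref{eq:Jbar}. The only subtlety is bookkeeping of exponents and constants in step two; there is no analytic obstacle, since all integrals encountered are elementary once $\beta>1$ is used. The hypothesis $\beta\geq 3/2$ (hence $m\geq 1$) is not needed for the algebraic identity itself but is inherited from Proposition~\ref{propDensityConvex} to guarantee that the resulting $h$ is convex, which is the feature that makes this representation useful in later sections.
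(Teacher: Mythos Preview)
Your proposal is correct and follows essentially the same route as the paper's own proof: compute $\Phi(s)=\frac{e^{-\alpha_0}}{\beta-1}s^{-(\beta-1)}$ by elementary integration, substitute $s=\frac{\pi}{4}\bigl(K_{Ic}/(n\cdot\sigma(Du)n)^+\bigr)^2$, and regroup constants into the form $((n\cdot\sigma(Du)n)^+/\sigma_0)^m$. The paper's proof is simply a terser version of exactly this calculation, and your additional remarks (on the role of the positive part, on the hypothesis $\beta\geq 3/2$ being needed only for convexity rather than for the algebraic identity) are correct clarifications.
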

\begin{proof} We have
$
\Phi(a)=\frac{e^{\alpha_0}}{(\beta-1)} a^{-(\beta-1)}
$.
One obtains
$$
\Phi\left(\frac{\pi}{4}\left(\frac{K_{Ic}}{(n\cdot \sigma(Du)n)^+}\right)^2\right)=\left(\frac{(n\cdot\sigma(Du)n)^+}{e^{-\alpha_0/2(\beta-1)}(\beta-1)^{1/(2(\beta-1)}\sqrt{\frac{4}{\pi}}K_{Ic}}\right)^{2(\beta-1)}.
$$

\end{proof}

\begin{remark}{\rm 
The above objective functional was introduced by Weibull in \cite{Wei} based on statistical evidence. Our derivation from the distribution of crack sizes is pretty standard in material science; see e.g.\ \cite[Chapter 5]{MF}.
For a different derivation of the same functional from the large sample limit of extreme value theory are applied along with some approximations that can be controlled numerically to a reasonable extent confer \cite{BC,Heg,Zie}.
}\end{remark}

\begin{remark}\rm Typical experimental values of $m$ range from 5 to 25; see \cite{Wei}. In particular the assumptions of Proposition \ref{propDensityConvex} do not rule out the cases of physical interest. Note that the large $m$ limit is deterministic.
\end{remark}

\begin{remark}\rm The dimensional mismatch between $\sigma_0$ and the stress intensity $K_{Ic}$ in equation (\ref{sigmaZero}) is explained by the fact that $\Phi(a)$ is a functional of a dimensional quantity $a$. Understanding $\Phi$ as a function of a numerical value, we need to introduce a length scale $a_0={\rm[m]}$ and consider $\Phi(a/a_0)$, which divides $K_{1c}$ by $\sqrt{a_0}$.
\end{remark}
\begin{corollary}\label{cor:Weibull_local_failure_intensity_function}
Let the Weibull local failure intensity function $h_W:\R^{3\times 3}_s\to \R_+$ be defined as
\begin{equation}
h_W(q):=\frac{1}{4\pi}\int_{S^2}\left(\frac{(n\cdot q\, n)^+}{\sigma_0}\right)^m\, dn. \label{eq:Weibull_local_failure_intensity_function}
\end{equation}
Then $h_W$ is convex for $m\geq 1$ and continuous.
\end{corollary}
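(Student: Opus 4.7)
My plan is to recognize the corollary as a direct specialization of the two preceding propositions applied to the algebraically decaying crack radius density $\varrho(a)=e^{-\alpha_0}a^{-\beta}$ with $\beta=m/2+1$. By Proposition \ref{PropWeibull}, this choice produces exactly the Weibull-type integrand appearing in $h_W$, so $h_W$ coincides with the function $h$ of Lemma \ref{lemHExplizit} for this particular $\rho$ (the normalization being absorbed into the constant $\sigma_0$).

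For the convexity assertion, I would verify the hypothesis of Proposition \ref{propDensityConvex}: here $\alpha(a)=\alpha_0+\beta\log a$, hence $\alpha'(a)=\beta/a$, so condition (\ref{convCond}) reduces to $\beta\geq 3/2$, which in turn is equivalent to $m=2(\beta-1)\geq 1$. Proposition \ref{propDensityConvex} then immediately yields convexity of $h=h_W$ for $m\geq 1$. Continuity follows at once from Lemma \ref{lem:hconti}, since that lemma asserts continuity of $h(q)$ for every admissible crack size measure $\rho$.

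The bookkeeping that matches the sharp threshold $m\geq 1$ to the sharp threshold $\beta\geq 3/2$ of Proposition \ref{propDensityConvex} is the only point requiring attention, so I do not expect any real obstacle. As a self-contained alternative, one could bypass Propositions \ref{propCov} and \ref{propDensityConvex} altogether and argue directly: the map $q\mapsto (n\cdot qn)^+=\max(n\cdot qn,0)$ is convex as the maximum of a linear function and zero, and $t\mapsto t^m$ is convex and nondecreasing on $[0,\infty)$ for $m\geq 1$, so the composition $q\mapsto ((n\cdot qn)^+)^m$ is convex; convexity is preserved by integration in $n$ over $S^2$. For the continuity statement, the integrand is continuous in $q$ for every fixed $n\in S^2$ and, on any bounded neighbourhood $\|q\|\leq R$, is uniformly dominated by $(R/\sigma_0)^m$, so Lebesgue's dominated convergence theorem applies.
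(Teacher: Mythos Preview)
Your proposal is correct and follows essentially the same route as the paper, which simply reads ``Apply Propositions \ref{propCov}, \ref{propDensityConvex} and \ref{PropWeibull}.'' You have merely unpacked the bookkeeping (the equivalence $\beta\geq 3/2 \Leftrightarrow m\geq 1$) and supplied the reference to Lemma~\ref{lem:hconti} for continuity, which the paper's one-line proof does not cite explicitly---continuity also follows automatically once convexity on all of $\R^{3\times 3}$ is established. Your self-contained alternative (convexity of $q\mapsto(n\cdot qn)^+$, monotone convex composition with $t\mapsto t^m$, then integration over $S^2$) is a perfectly valid and arguably cleaner direct argument that bypasses the crack-size machinery altogether.
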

\begin{proof}
Apply Propositions \ref{propCov}, \ref{propDensityConvex} and \ref{PropWeibull}. 
\end{proof}

\section{\label{sect:OptimalReliability}Shapes with Optimal Survival Probability}
Having the results of the previous section at hand, we can now show the existence of shapes solving the shape optimization problem given in Definition \ref{defSOProblem}  -- hence solutions with optimal survival property.

As we will deal with problems involving mixed boundary conditions, we need an appropriate extension operator.
\begin{theorem}[Theorem II.1 in \cite{Chen}]\label{thm:Chenais_ii.1}
Let $\theta, l, r \in \mathbb{R}$ s.t.\ $\theta \in ]0,\pi/2[$ and $2r \leq l$ and let $n \in \mathbb{N}$. There exists a constant $K(\theta,l,r)$ depending on $\Omega \in \Pi(\theta,l,r)$ through $\theta, h, r$, only, and s.t.\ for all $\Omega \in \Pi(\theta,l,r)$ there exists a linear and continuous extension operator $p_\Omega : H^n(\Omega,\R^3) \rightarrow H^n(\mathbb{R}^3,\R^3)$, s.t. $p_\Omega u(x) = u(x)$ for all $x \in \Omega$, with
\[
\|p_\Omega\| \leq K(\theta,l,r).
\]
\end{theorem}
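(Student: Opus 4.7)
The plan is to derive this extension theorem by the standard Calder\'on--Stein reflection strategy, taking care at every step to make all constants depend only on the cone parameters $\theta,l,r$ (and not on the particular $\Omega\in\Pi(\theta,l,r)$). The starting point is that the uniform cone property, as given in Definition \ref{defUC}, is known to be equivalent to the boundary $\partial\Omega$ being locally the graph of a Lipschitz function whose Lipschitz constant can be bounded purely in terms of $\theta$, with the size of the chart controlled by $l$ and $r$. I would first make this equivalence quantitative: for every $x\in\partial\Omega$, after a rotation aligning $\zeta_x$ with the $x_3$-axis, the open cylinder $Q_x$ of radius $r'=r'(\theta,r)$ and height $h'=h'(\theta,l)$ centred at $x$ intersects $\Omega$ in the subgraph of a function $\phi_x:\R^2\to\R$ with $\mathrm{Lip}(\phi_x)\leq \cot\theta$.

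Next I would construct a uniformly locally finite cover of $\partial\Omega$ by such cylinders $\{Q_{x_i}\}$, with the number of cylinders overlapping any point bounded by a combinatorial constant $M(\theta,l,r)$; this is obtained by a Besicovitch/Vitali selection using the uniform lower bound on the chart radius. Attached to this cover I build a partition of unity $\{\psi_i\}$ with $\|\psi_i\|_{C^n}\leq C(\theta,l,r)$, plus an interior cutoff $\psi_0$ supported in $\Omega$ at distance $\geq c(\theta,l,r)$ from $\partial\Omega$. On each cylinder $Q_{x_i}$ I straighten the boundary by the change of variables $(y',y_3)\mapsto (y',y_3-\phi_{x_i}(y'))$, whose Jacobian and inverse Jacobian are bounded in terms of $\cot\theta$ alone; this reduces the local extension problem to extending an $H^n$ function from a half-space. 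There I apply a fixed Stein-type higher-order reflection operator $E$ of order $n$, whose norm $\|E\|_{H^n(\R^3_+)\to H^n(\R^3)}$ is an absolute constant.

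The global extension is then defined by
\[
p_\Omega u(x) := \psi_0(x)u(x) + \sum_i \psi_i(x)\, \bigl(E(u\circ \Psi_i^{-1})\bigr)\!\circ \Psi_i(x),
\]
where $\Psi_i$ is the boundary-straightening map. Summing the local $H^n$-estimates, using the bounded overlap $M(\theta,l,r)$ and the uniform chain-rule bounds from $\Psi_i^{\pm 1}$ and $\psi_i$, yields $\|p_\Omega u\|_{H^n(\R^3)}\leq K(\theta,l,r)\|u\|_{H^n(\Omega)}$; linearity is immediate from the construction, and $p_\Omega u=u$ on $\Omega$ because the reflection operator $E$ restricted to the half-space is the identity.

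The hard part, and really the only nontrivial bookkeeping, is proving \emph{uniformity} of the constant $K(\theta,l,r)$: one must verify that (i) the chart radii $r',h'$, (ii) the Lipschitz constants of the $\phi_{x_i}$, (iii) the overlap number $M$, (iv) the $C^n$-norms of the partition of unity, and (v) the Jacobian bounds for $\Psi_i$ all depend only on $\theta,l,r$ and not on $\Omega$. The whole proof is essentially a quantified version of the classical Sobolev extension construction; the uniform cone property supplies exactly the quantitative boundary regularity required to freeze these constants across the entire admissible class $\Pi(\theta,l,r)$.
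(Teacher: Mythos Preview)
Your sketch is correct and in fact goes far beyond what the paper does: the paper's ``proof'' consists entirely of the one line ``See proof of Theorem~II.1 in \cite{Chen}.'' So there is nothing to compare against in the paper itself; the result is simply quoted from Chenais.

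That said, your outline is essentially the strategy Chenais herself follows in \cite{Chen}: pass from the uniform cone condition to a uniformly Lipschitz local graph description with chart size and Lipschitz constant controlled by $(\theta,l,r)$ alone, build a uniformly locally finite cover with a subordinate partition of unity whose $C^n$-norms are likewise controlled, perform a local higher-order reflection after flattening, and glue. The only point worth a cautionary remark is your step (i)--(ii): the passage from the cone condition of Definition~\ref{defUC} to a Lipschitz-graph chart with the stated quantitative bounds is itself a lemma that needs to be proved (Chenais does this explicitly); it is not quite ``known'' in the sense of being a one-liner, and getting the chart radius $r'(\theta,r)$ and height $h'(\theta,l)$ to depend only on the cone parameters is precisely where the uniformity of $K(\theta,l,r)$ is won. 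Once that lemma is in hand, the remaining estimates (bounded overlap via a Vitali/Besicovitch argument at fixed scale, Jacobian bounds for the bi-Lipschitz flattening, and the absolute $H^n$-bound for the half-space reflection) are routine, and your bookkeeping list (i)--(v) is exactly the right checklist.
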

\begin{proof}
See proof of Theorem~II.1 in \cite{Chen}. 
\end{proof}

Further, we need the following result.
\begin{lemma}[\cite{Fuj}]\label{lem:pi_theta_h_r_is_relative_compact_and_closed}
The class $\mathscr{O}^{\rm ad}$ of domains is relatively compact and is closed with respect to the strong $L^2(\widehat \Omega)$ topology, i.e. the metric topology from $d(\Omega,\Omega')=\| \chi(\Omega)-\chi(\Omega')\|_{L^2(\widehat\Omega)}$. Here $\chi(\Omega)$ stands for the characteristic function of the set $\Omega$.

The same applies to the volume constraint sets $\mathscr{O}_V^{\rm ad}$, provided $0<V<|\widehat \Omega|$.
\end{lemma}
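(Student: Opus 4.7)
The plan is to reduce both assertions to the classical results on domains with the uniform cone property, essentially following the argument of Chenais that underlies Theorem~\ref{thm:Chenais_ii.1}. The core idea is that membership in $\Pi(\theta,l,r)$ provides quantitative boundary regularity strong enough both to yield compactness and to survive the $L^2$-limit of characteristic functions.

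For relative compactness in the $L^2(\widehat\Omega)$ metric, I would first observe that the uniform cone condition forces a uniform perimeter bound $\mathcal{H}^2(\partial\Omega)\leq M(\theta,l,r,\widehat\Omega)$ for every $\Omega\in\mathscr{O}^{\rm ad}$: cover $\partial\Omega$ by finitely many balls of radius $r$, use the cone condition to bound the surface area inside each such ball by a constant depending only on $\theta,l,r$, and note that the number of covering balls is controlled by $|\widehat\Omega|$ and $r$. Consequently the family $\{\chi(\Omega):\Omega\in\mathscr{O}^{\rm ad}\}$ is uniformly bounded in $BV(\widehat\Omega)$, hence relatively compact in $L^1(\widehat\Omega)$ by the compact embedding $BV\hookrightarrow L^1$. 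Since these functions are uniformly bounded in $L^\infty$, the estimate $\|f\|_{L^2}^2\leq\|f\|_{L^\infty}\|f\|_{L^1}$ upgrades this to relative compactness in $L^2(\widehat\Omega)$.

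For closedness, let $\Omega_n\in\mathscr{O}^{\rm ad}$ with $\chi(\Omega_n)\to\chi(\Omega^*)$ in $L^2(\widehat\Omega)$, and, after passing to a subsequence, almost everywhere. The a.e.\ convergence immediately gives $\Omega^*\subseteq\widehat\Omega$ up to a null set. The crux is showing $\Omega^*\in\Pi(\theta,l,r)$. Given $x\in\partial\Omega^*$ I would select a sequence $x_n\in\partial\Omega_n$ with $x_n\to x$, exploiting the fact that the uniform cone condition yields both interior and exterior balls of controlled size at each boundary point, so $L^2$-convergence of characteristic functions forces boundaries to approach in a Hausdorff-type sense. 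By compactness of $S^2$ extract a subsequential limit $\zeta_x=\lim_n \zeta_{x_n}$. For any $y\in B(x,r)\cap\Omega^*$, pick $y_n\in B(x_n,r)\cap\Omega_n$ with $y_n\to y$; then $y_n+C(\zeta_{x_n},\theta,l)\subset\Omega_n$, and because the cones $C(\zeta_{x_n},\theta,l)$ are open and converge to $C(\zeta_x,\theta,l)$ while $\chi(\Omega_n)\to\chi(\Omega^*)$ almost everywhere, the inclusion $y+C(\zeta_x,\theta,l)\subset\Omega^*$ persists in the limit. The fixed boundary portions $\partial\Omega_D$ and $\partial\Omega_{N_\text{fixed}}$ lie in $\partial\widehat\Omega$ and are in each $\partial\Omega_n$; since the exterior cones of $\widehat\Omega$ at those points lie outside every $\Omega_n$, they remain outside $\Omega^*$ in the limit, while interior cones of $\Omega_n$ along these portions give interior approach in $\Omega^*$, so $\partial\Omega_D,\partial\Omega_{N_\text{fixed}}\subseteq\partial\Omega^*$.

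The volume-constrained case $\mathscr{O}^{\rm ad}_V$ is immediate: $L^2$-convergence on the bounded domain $\widehat\Omega$ implies $L^1$-convergence, hence $|\Omega^*|=\lim_n|\Omega_n|=V$, so $\mathscr{O}^{\rm ad}_V$ inherits both relative compactness and closedness; the hypothesis $0<V<|\widehat\Omega|$ ensures nonemptiness is not a vacuous issue. The main obstacle is the cone-preservation step in the closure argument: a priori the topological boundaries $\partial\Omega_n$ need not converge in any strong sense under $L^2$-convergence of characteristic functions alone, and it is precisely the uniform cone condition that supplies the coercivity needed to pick out convergent sequences of boundary points and matching cone directions. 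This is the technical heart of the Chenais--Fujii argument, and the only point where nontrivial work beyond the compactness invocation is required.
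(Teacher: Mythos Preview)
Your proposal is correct, but it does substantially more work than the paper. The paper's own proof is a two-line citation: relative compactness and closedness of $\Pi(\theta,l,r)$ are quoted directly from Chenais (Theorems~III.1 and III.2 in \cite{Chen}); the fact that $\mathscr{O}^{\rm ad}$ is closed inside $\Pi(\theta,l,r)$ is declared ``obvious''; and the volume-constrained case follows because $\mathscr{O}_V^{\rm ad}$ is closed in $\mathscr{O}^{\rm ad}$ under the $L^2$ metric. You, by contrast, sketch the content of Chenais's theorems themselves. Your compactness argument via a uniform perimeter bound and the embedding $BV\hookrightarrow L^1$ is a legitimate route (and arguably more transparent than Chenais's original covering argument); your closedness argument by limiting boundary points and cone directions is essentially Chenais's own strategy. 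You also supply an explicit argument for why the fixed boundary pieces $\partial\Omega_D$ and $\partial\Omega_{N_{\text{fixed}}}$ persist in the limit, which the paper passes over. The trade-off is clear: the paper's approach is economical and keeps the focus on the shape-optimization result, while yours is self-contained and exposes where the uniform cone condition actually does the work---useful if one wanted to relax that hypothesis, as the paper's conclusion in fact speculates.
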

\begin{proof}
Theorem~III.1 in \cite{Chen} states that $\Pi(\theta,l,r)$ is relative compact, Theorem~III.2 in \cite{Chen} shows that it is closed. $\mathscr{O}^{\rm ad}$ obviously is closed in $\Pi(\theta,l,r)$. The second statement follows from the fact that $\mathscr{O}_V^{\rm ad}\subseteq \mathscr{O}^{\rm ad}$ is closed in the $L^2(\widehat\Omega)$-topology. 
\end{proof}

The main tool for showing the existence of optimal shape is the following theorem.
\begin{theorem}\label{thm:fuji_thm2.1}
Let $h$ be continuous, non-negative, and convex. Assume that for \linebreak $\{\Omega_n\} \subset \Pi(\theta,l,r)$ we have
\[
\Omega_n \rightarrow \Omega, \quad \text{a.e. in} \; \widehat \Omega,
\]
i.e., the characteristic functions of $\Omega_n$ converge to the characteristic function of $\Omega$ in $L^2(\widehat\Omega)$, and that for the extension $\tilde{u}_n = p_\Omega(u_n)$ of $u_n \in H^1(\Omega_n,\R^3)$ we have
\[
\tilde{u}_n \rightharpoonup \tilde{u}, \quad \text{in} \; H^1(\widehat \Omega,\R^3),
\]
where $\tilde{u} = p_\Omega(u)$. Then, the following inequality holds:
\[
\int\limits_\Omega h(Du(x)) dx \leq \liminf\limits_{n\rightarrow\infty} \int\limits_{\Omega_n} h(Du_n(x)) dx.
\]
\end{theorem}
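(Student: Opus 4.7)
The strategy is to reduce the claim to the classical weak lower semicontinuity of convex integrals on a fixed Lipschitz domain by exhausting $\Omega$ from the interior with compactly contained Lipschitz subdomains $\Omega'\Subset\Omega$. The crucial geometric point is that on such an $\Omega'$, for all $n$ sufficiently large, $u_n$ is well defined and agrees with the extension $\tilde u_n$. Fix such an $\Omega'$. The uniform cone property built into $\mathscr{O}^{\rm ad}$ together with the $L^2$-convergence $\chi_{\Omega_n}\to\chi_\Omega$ furnished by Lemma~\ref{lem:pi_theta_h_r_is_relative_compact_and_closed} upgrades convergence in measure to Hausdorff convergence of the domains (a standard consequence of Chenais's results \cite{Chen}), and the positive distance $\mathrm{dist}(\overline{\Omega'},\partial\Omega)>0$ then yields $\overline{\Omega'}\subset\Omega_n$ for all $n$ sufficiently large. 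On such $\Omega'$, continuity of the restriction map $H^1(\widehat\Omega,\R^3)\to H^1(\Omega',\R^3)$ implies $u_n|_{\Omega'}=\tilde u_n|_{\Omega'}\rightharpoonup\tilde u|_{\Omega'}=u|_{\Omega'}$ weakly in $H^1(\Omega',\R^3)$.

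I would then invoke the classical weak lower semicontinuity of $v\mapsto\int_{\Omega'}h(Dv)\,dx$ on $H^1(\Omega',\R^3)$, which holds for any convex, continuous, nonnegative $h$ without any growth hypothesis. The standard proof uses Mazur's lemma to produce finite convex combinations $v_N=\sum_k\alpha_k^N u_k$ converging to $u$ strongly in $H^1(\Omega',\R^3)$; along a subsequence $Dv_N\to Du$ pointwise a.e., continuity of $h$ gives $h(Dv_N)\to h(Du)$ pointwise a.e., and Fatou's lemma (using $h\geq 0$) combined with Jensen's inequality $h(Dv_N(x))\leq\sum_k\alpha_k^N h(Du_k(x))$ and the averaging principle for convergent numerical sequences yields $\int_{\Omega'}h(Du)\,dx\leq\liminf_{n\to\infty}\int_{\Omega'}h(Du_n)\,dx$. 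Since $\overline{\Omega'}\subset\Omega_n$ and $h\geq 0$ imply $\int_{\Omega'}h(Du_n)\,dx\leq\int_{\Omega_n}h(Du_n)\,dx$, one gets
\[
\int_{\Omega'}h(Du)\,dx\;\leq\;\liminf_{n\to\infty}\int_{\Omega_n}h(Du_n)\,dx.
\]

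Finally I would let $\Omega'\nearrow\Omega$ through an exhausting sequence $\Omega'_k\Subset\Omega$ with $\bigcup_k\Omega'_k=\Omega$; monotone convergence (once more using $h\geq 0$) gives $\int_{\Omega'_k}h(Du)\,dx\nearrow\int_\Omega h(Du)\,dx$, yielding the theorem. The principal obstacle is the geometric step $\overline{\Omega'}\subset\Omega_n$ for $n$ large: bare $L^2$-convergence of characteristic functions does not suffice for set-theoretic containment of compactly contained subsets (a thin-fingers construction on $\Omega_n$ is a counterexample), and it is exactly the uniform cone property of the admissible shapes that permits this upgrade. An alternative route, which avoids the Hausdorff-convergence step but imports a different difficulty, is the direct Mazur-plus-Fatou argument carried out on $\widehat\Omega$ through $\tilde u_n$: there the obstacle becomes the bound $\int_{\Omega\setminus\Omega_n}h(D\tilde u_n)\,dx\to 0$, which requires uniform integrability of $h(D\tilde u_n)$ and hence a growth control on $h$ that is not assumed in the statement. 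For this reason the exhaustion approach is preferable, and all remaining ingredients -- Mazur's lemma, Fatou's lemma, Jensen's inequality, monotone convergence -- are routine.
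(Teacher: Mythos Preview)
Your argument is correct and, in fact, supplies what the paper does not: the paper's proof consists solely of the remark that Fujii's scalar argument in \cite{Fuj} carries over verbatim to vector-valued $u$. Your exhaustion by $\Omega'\Subset\Omega$, the upgrade from $L^2$-convergence of characteristic functions to the compact-stability property $\overline{\Omega'}\subset\Omega_n$ for large $n$ via the uniform cone condition (which is indeed the content of Chenais's results), and the Mazur--Jensen--Fatou chain on the fixed domain $\Omega'$ together reproduce Fujii's proof strategy; the only cosmetic difference is that you spell out the passage to a subsequence realising the $\liminf$ before invoking Mazur, which is the clean way to justify ``the averaging principle'' you mention. Your diagnosis of the two possible obstacles---that bare $L^2$-convergence of $\chi_{\Omega_n}$ does not yield $\overline{\Omega'}\subset\Omega_n$ without the cone property, and that the alternative global route on $\widehat\Omega$ would require an unavailable growth bound on $h$---is accurate and is precisely why Fujii works in the class $\Pi(\theta,l,r)$.
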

\begin{proof}
The proof of Theorem 2.1 in \cite{Fuj} extends without modifications from scalar $u$ to vector valued $u$. 
\end{proof}

In order to apply this theorem, we have have to show that an arbitrary sequence $\{\Omega_n,u_n\}$ of admissible domains and solutions has a subsequence that  converges.

\begin{lemma}\label{lem:convergence_of_domain_and_solution}
Let $\partial\Omega_D$ and $\partial\Omega_N$ be defined as above and let $\{\Omega_n,u_n\}_{n=1}^\infty$ an arbitrary sequence of admissible domains and their corresponding solutions, i.e., $\Omega_n \in \mathscr{O}^{\rm ad}$ and $u_n=u(\Omega_n)$ solves~\eqref{WeakEquation} in the domain $\Omega_n$. Then one can find its subsequence also denoted by the pair $(\Omega_n,u_n)$ and elements $\Omega \in \Pi(\theta,l,r)$ and $u \in (H^1(\mathbb{R}^3),\R^3)$ such that
\[
\Omega_n \rightarrow \Omega,~~\mbox{ and }~~
\tilde{u}_n \rightharpoonup \tilde{u},
\]
where $\tilde{u}_n$ and $\tilde{u}$ are the extensions of $u_n$ and $u$ to $\mathbb{R}^n$ and $u$ solves \eqref{WeakEquation} in $\Omega$. The same holds true, if $\mathscr{O}^{\rm ad}$ is replaced with the volume constrained sets $\mathscr{O}_V^{\rm ad}$.
\end{lemma}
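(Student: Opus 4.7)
The plan is to combine the compactness of $\mathscr{O}^{\rm ad}$ from Lemma~\ref{lem:pi_theta_h_r_is_relative_compact_and_closed} with a uniform a priori estimate for $u_n$ and the uniformly bounded Chenais extension of Theorem~\ref{thm:Chenais_ii.1}, and then to pass to the limit in~\eqref{WeakEquation} by rewriting volume integrals over $\Omega_n$ as integrals against the characteristic function $\chi(\Omega_n)$ on $\R^3$. First, Lemma~\ref{lem:pi_theta_h_r_is_relative_compact_and_closed} yields, up to extraction, $\chi(\Omega_n) \to \chi(\Omega)$ strongly in $L^2(\widehat\Omega)$ and a.e., with limit $\Omega \in \mathscr{O}^{\rm ad}$; in the volume-constrained case, $L^2$ convergence of characteristic functions preserves $|\Omega| = V$, so $\Omega \in \mathscr{O}^{\rm ad}_V$.

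Next, I would derive the uniform bound $\|u_n\|_{H^1(\Omega_n,\R^3)} \leq C$ by testing~\eqref{WeakEquation} with $v = u_n$, invoking the ellipticity~\eqref{eq:elasticity_tensor_ellipticity} together with Korn's inequality on $\Omega_n$, and estimating the right-hand side via trace and Poincar\'e-type inequalities. The Chenais extension can be used here to transfer these functional inequalities from a fixed reference geometry, so that the resulting constant depends only on $\theta, l, r$, the Lam\'e constants, the loads $f,g$, and the fixed sets $\partial\Omega_D$, $\widehat\Omega$. Applying $p_{\Omega_n}$ and its uniform norm bound $K(\theta,l,r)$ gives $\|\tilde u_n\|_{H^1(\R^3,\R^3)} \leq C'$; after multiplication by a fixed cutoff preserving the values on $\widehat\Omega$ (to keep supports in a common bounded set), a further subsequence extraction yields $\tilde u_n \rightharpoonup \tilde u$ weakly in $H^1(\R^3,\R^3)$. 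Set $u := \tilde u|_\Omega$.

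To identify $u$ as the weak solution on $\Omega$, pick any test function $v \in H^1(\R^3,\R^3)$ with $v = 0$ on $\partial\Omega_D$; its restriction to $\Omega_n$ lies in $H^1_{\partial\Omega_D}(\Omega_n,\R^3)$ since $\partial\Omega_D \subseteq \partial\Omega_n$. Rewriting
\[
\mathscr{B}_{\Omega_n}(u_n,v) = \int_{\R^3} \chi(\Omega_n)\, \varepsilon(D\tilde u_n):\sigma(Dv)\,dx,
\]
one passes to the limit by combining the strong $L^2$ convergence of $\chi(\Omega_n)\sigma(Dv)$ with the weak $L^2$ convergence of $\varepsilon(D\tilde u_n)$; the volume-load integral is treated identically, and the surface-load integral is independent of $n$ because $\partial\Omega_{N_\text{fixed}}\subseteq \partial\Omega_n$. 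To verify $u \in H^1_{\partial\Omega_D}(\Omega,\R^3)$, each $u_n$ admits an $H^1$ extension by zero across $\partial\Omega_D \subseteq \partial\widehat\Omega$ to a fixed enlarged open set $\Omega^* \supsetneq \widehat\Omega$; this extension is bounded in $H^1(\Omega^*,\R^3)$, and its weak limit is zero outside $\widehat\Omega$ while agreeing with $u$ on $\Omega$, which is the desired zero-trace characterisation. Uniqueness in Theorem~\ref{thm:stateEquation} then identifies $u = u(\Omega)$.

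The hard part is the uniform a priori estimate: one must ensure that the Korn--Poincar\'e constant for $H^1_{\partial\Omega_D}(\Omega_n,\R^3)$ does not degenerate along the sequence. This is where the uniform cone property and the common Dirichlet portion $\partial\Omega_D$ are indispensable, with the Chenais extension playing a dual role by both providing a single ambient space in which to take weak limits and by transferring the necessary inequalities to a fixed reference geometry. The preservation of the Dirichlet trace under weak convergence is a secondary subtlety, handled cleanly by the zero-extension argument above.
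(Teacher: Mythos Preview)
Your proposal is correct and follows essentially the same route as the paper: compactness of domains from Lemma~\ref{lem:pi_theta_h_r_is_relative_compact_and_closed}, a uniform $H^1$ a priori bound via ellipticity and Korn, uniform Chenais extension, weak compactness, and passage to the limit by rewriting the volume integrals against $\chi(\Omega_n)$. The only substantive difference concerns the uniform Korn constant: the paper invokes a result of Nitsche~\cite{Nit} giving a Korn constant depending only on the cone parameters $(\theta,l,r)$, whereas your suggestion to obtain it by ``transferring functional inequalities from a fixed reference geometry via the Chenais extension'' is not quite sharp as stated, since the extension bound $\|p_{\Omega_n}u_n\|_{1,\widehat\Omega}\le K\|u_n\|_{1,\Omega_n}$ runs the wrong way for this purpose; you should appeal to~\cite{Nit} or an equivalent uniform-Korn statement at that step. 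Conversely, your handling of the surface-load term (fixed, hence $n$-independent) and of the preservation of the Dirichlet trace under weak limits is in fact more careful than the paper's own argument.
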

\begin{proof}
We define the set of admissible displacements as
\[
\mathbb{V}(\Omega) := \{ v \in H^1(\Omega,\R^3)|v=  0\ \text{on}\ \partial\Omega_D \}.
\]
For the bilinear form $\mathscr{B}_\Omega$ in \eqref{WeakEquation} using the ellipticity condition \eqref{eq:elasticity_tensor_ellipticity} we get for all \linebreak $v \in \mathbb{V}(\Omega)$:
\begin{align*}
\mathscr{B}_\Omega(v,v) & = \int\limits_\Omega \varepsilon(Dv):\sigma(Dv) dx\\
& \geq 2\mu \int\limits_\Omega  \varepsilon(Dv):\varepsilon(Dv) dx = 2\mu\|\varepsilon(Dv)\|_{0,\Omega}^2.
\end{align*}
Using this, we obtain
\begin{align}
2\mu \|\varepsilon(Du_n)\|_{0,\Omega_n}^2 \leq \mathscr{B}_{\Omega_n}(u_n,u_n) &= \int\limits_{\Omega_n} f \cdot u_n dx + \int\limits_{\partial(\Omega_n)_{N_\text{fixed}}} g \cdot u_n ds \\
&\leq (c_1 + c_2) \cdot \| u_n \|_{1,\Omega_n},
\end{align}
where the constant $c_1$ accounts for the bound of the integral over $f \cdot u_n$ and $c_2$ originates from the application of the trace theorem over the fixed Neumann boundary of the domain. Note that while $c_2$ depends on the domain under consideration obviously we can use the extension $\bar{u}_n$ of $u_n$ to $\widehat{\Omega}$. As we have
\[
\| \bar{u}_n \|_{1,\widehat{\Omega}} \leq K(\theta,l,r) \| \bar{u}_n \|_{1,\Omega_n} = K(\theta,l,r) \| u_n \|_{1,\Omega_n}
\]
the estimate holds by including the factor $K(\theta,l,r)$ in $c_2$. From Korn's inequality we can follow that there exists a $\beta > 0$ such that for all $v \in \mathbb{V}(\Omega)$ we have
\[
\|\varepsilon(v)\|_{0,\Omega}^2 \geq \beta \|v\|_{1,\Omega};
\]
furthermore, a result from \cite{Nit} guarantees that this $\beta$ can be uniformly bounded for all domains under consideration and we obtain that there exists a constant $c$ for all $\Omega_n$ such that
\[
\|u_n\|_{1,\Omega_n} \leq c.
\]
Due to Theorem~\ref{thm:Chenais_ii.1} the extension $\tilde{u}_n$ of $u_n$ to $\mathbb{R}^3$ is bounded and so is the extension to $\widehat{\Omega}$. Using this and Lemma~\ref{lem:pi_theta_h_r_is_relative_compact_and_closed} we obtain that there exists a subsequence of $\{\Omega_n,\tilde{u}_n\}_{n = 1}^\infty$ where $\Omega_n \rightarrow \Omega$ due to Lemma~\ref{lem:pi_theta_h_r_is_relative_compact_and_closed} and where $\tilde{u}_n$ converges weakly to some function $u \in (H^1(\widetilde{\Omega}))^3$.

It remains to show that this $u$ solves~\eqref{WeakEquation}. For this purpose we proceed as in the proof of Proposition IV.1 in \cite{Chen}. We have that
\[
\mathscr{B}_{\Omega_n}(u_n,v) = \int_{\Omega_n} f\cdot v\,dx+\int_{\partial(\Omega_n)_N} g\cdot v \,ds, \quad \forall v\in  H^1_{\partial (\Omega_n)_D}(\Omega_n).
\]
This is equivalent to
\[
\int_{\widehat{\Omega}} \chi(\Omega_n) \tr(\varepsilon(Du)\sigma(Dv)) \, dx = \int_{\widehat{\Omega}} \chi(\Omega_n) f\cdot v \, dx + \int_{\widehat{\Omega}} \chi(\partial(\Omega_n)_N) g \cdot v \, ds,
\]
$\quad \forall v\in  H^1_{\partial \Omega_D}(\Omega)$, where $\chi(\partial(\Omega_n)_N)$ denotes the characteristic function of $\partial(\Omega_n)_N$ as usual. We show the convergence of each of the integrals. For the first integral of the right-hand side we obviously have that for each $v \in L^2(\widehat{\Omega})$ we have
\[
|\chi(\Omega_n) v| \leq |v|
\]
and as the characteristic function converges a.e., we obtain that $\chi(\Omega_n) v \rightarrow \chi(\Omega) v$ and so we get
\[
\int_{\widehat{\Omega}} \chi(\Omega_n) f\cdot v \, dx \quad \rightarrow \int_{\widehat{\Omega}} \chi(\Omega) f\cdot v \, dx.
\]
As this is true for $v \in L^2(\widehat{\Omega},\R^3)$ it holds for $H^1_{\partial \Omega_D}(\Omega,\R^3)$, as well. For the partial derivatives in the integral on the left-hand side the same argument holds true. For the second integral on the right-hand side we can argue in the same manner, as the convergence of $\chi(\Omega_n)$ implies the convergence of the characteristic function of the boundary $\partial(\Omega_n)_N$. 
\end{proof}

We are now in the position to prove the main result of this work:

\begin{theorem}
Let $\partial\Omega_D$ and $\partial\Omega_N$ be defined as above and let $\{\Omega_n,u(\Omega_n)\}_{n=1}^\infty$ be a minimizing sequence of admissible domains in $\mathscr{O}^{\rm ad}$ and their corresponding solutions, i.e. $u(\Omega_n)$ solves~(\ref{WeakEquation}) in the domain $\Omega_n$ and
\begin{equation}
 \lim_{n\to\infty}J(\Omega_n,u(\Omega_n))=\inf_{\Omega\in \mathscr{O}^{\rm ad}}J(\Omega,u(\Omega)),
\end{equation}
where $J$ is defined as in \eqref{eq:Jbar}.
Moreover, assume that the crack size measure $\rho$ fulfils the non decreasing stress hazard property. Let $\tilde u^*$ and $\Omega^*\in\mathscr{O}^{\rm ad}$ be the limit points of a subsequence as defined in Lemma \ref{lem:convergence_of_domain_and_solution}. Then the restriction $u^*=u(\Omega^ *)$ of the weak limit $\tilde{u}^ *$ and $\Omega^ *$ solve the shape optimization problem \ref{defSOProblem}. Thus there exist shapes $\Omega^*\in \mathscr{O}^{\rm ad}$  that maximize the probability of survival \ref{defSurvProb}. This in particular applies to the Weibull model for $m>0$. The above statements also remain true for the volume constraint shape optimization problem, where $\mathscr{O}^{\rm ad}$ is replaced by $\mathscr{O}_V^{\rm ad}$, the admissible shapes of volume $V$, provided this set is not empty.
\end{theorem}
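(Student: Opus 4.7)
The plan is to execute the standard direct method of the calculus of variations, combining three ingredients already established earlier in the excerpt: the compactness plus weak $H^1$-convergence supplied by Lemma \ref{lem:convergence_of_domain_and_solution}, the weak-$H^1$ lower semicontinuity of $\int h(Du)$ from Theorem \ref{thm:fuji_thm2.1}, and the nonnegativity, continuity and convexity of $h$ provided by Lemma \ref{lemHExplizit}, Lemma \ref{lem:hconti} and Proposition \ref{propCov}. The cone property together with the Chenais extension (Theorem \ref{thm:Chenais_ii.1}) provides the uniform control across domains that glues these pieces together.

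Concretely, starting from a minimizing sequence $\{\Omega_n,u(\Omega_n)\}\subset\mathscr{O}^{\rm ad}$, I would first apply Lemma \ref{lem:convergence_of_domain_and_solution} to extract (without relabelling) a subsequence together with limit points $\Omega^*$ and $\tilde u^*$ such that $\chi(\Omega_n)\to\chi(\Omega^*)$ in $L^2(\hat\Omega)$ and the Chenais extensions satisfy $\tilde u_n\rightharpoonup\tilde u^*$ weakly in $H^1(\hat\Omega,\R^3)$, with $u^*:=\tilde u^*|_{\Omega^*}$ solving the weak equation \eqref{WeakEquation} on $\Omega^*$; thus $u^*=u(\Omega^*)$. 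Closedness of $\mathscr{O}^{\rm ad}$ in the $L^2(\hat\Omega)$-topology (Lemma \ref{lem:pi_theta_h_r_is_relative_compact_and_closed}) together with the fact that $\partial\Omega_D$ and $\partial\Omega_{N_{\text{fixed}}}$ are fixed portions of $\partial\hat\Omega$ ensures $\Omega^*\in\mathscr{O}^{\rm ad}$.

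Next I would feed the triple $(\Omega_n,u_n,h)$ into Theorem \ref{thm:fuji_thm2.1}: $h$ is nonnegative by construction, continuous by Lemma \ref{lem:hconti}, and convex by Proposition \ref{propCov} under the assumed non-decreasing stress hazard property on $\rho$. The conclusion of that theorem yields
\[
J(\Omega^*,u(\Omega^*))=\int_{\Omega^*}h(Du^*)\,dx\leq\liminf_{n\to\infty}\int_{\Omega_n}h(Du_n)\,dx=\lim_{n\to\infty}J(\Omega_n,u(\Omega_n))=\inf_{\Omega\in\mathscr{O}^{\rm ad}}J(\Omega,u(\Omega)).
\]
Since $\Omega^*$ is itself admissible, the reverse inequality holds trivially and $\Omega^*$ attains the infimum. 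Monotonicity of $p_s=\exp(-J)$ (Definitions \ref{defSurvProb} and \ref{defSOProblem}) then translates this into maximization of the survival probability. For the Weibull specialization it suffices to replace the invocation of Proposition \ref{propCov} by Corollary \ref{cor:Weibull_local_failure_intensity_function}, which provides exactly the convexity and continuity of $h_W$ in the applicable range of the exponent.

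For the volume-constrained variant, I would observe that $L^2(\hat\Omega)$-convergence $\chi(\Omega_n)\to\chi(\Omega^*)$ passes to the volume via $|\Omega_n|=\int_{\hat\Omega}\chi(\Omega_n)\,dx\to|\Omega^*|$, so $\mathscr{O}^{\rm ad}_V$ is $L^2$-closed (this is already part of Lemma \ref{lem:pi_theta_h_r_is_relative_compact_and_closed}) and the argument above applies verbatim with $\mathscr{O}^{\rm ad}$ replaced by $\mathscr{O}^{\rm ad}_V$, provided the latter is nonempty. The main technical obstacle in this programme is located upstream of what I have to do here: it lies in Lemma \ref{lem:convergence_of_domain_and_solution}, whose proof already produces the uniform $H^1$ bound on the extensions via Korn's inequality with a domain-uniform constant (from \cite{Nit}) and the Chenais extension, and identifies $u^*$ as a weak solution by passing to the limit in the characteristic-function-weighted weak formulation. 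Once that lemma is in hand, the existence proof reduces to the direct-method synthesis sketched above.
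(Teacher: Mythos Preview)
Your proposal is correct and follows essentially the same route as the paper: nonnegativity, continuity (Lemma~\ref{lem:hconti}) and convexity (Proposition~\ref{propCov}) of $h$ feed into Theorem~\ref{thm:fuji_thm2.1}, while Lemma~\ref{lem:convergence_of_domain_and_solution} supplies the required convergence of domains and extended solutions; the Weibull case is then handled via Corollary~\ref{cor:Weibull_local_failure_intensity_function}. Your write-up is simply more explicit than the paper's in spelling out the liminf chain, the reverse inequality, the passage to survival probability, and the volume-constrained variant.
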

\begin{proof}
The function $h$ given by \eqref{hexplizit} is obviously non-negative, as the integrand is non-negative. Proposition~\ref{propCov} gives convexity of $h$ and Lemma~\ref{lem:hconti} its continuity. Furthermore, the strong convergence of the domains and the weak convergence of the corresponding solution is guaranteed by Lemma~\ref{lem:convergence_of_domain_and_solution}, so all requirements of Theorem~\ref{thm:fuji_thm2.1} are fulfiled and the assertion follows. By Proposition \ref{PropWeibull} and Corollary \ref{cor:Weibull_local_failure_intensity_function}, the Weibull model is a special case. 
\end{proof}

\section{Conclusions}
In this paper we have proven the existence of shapes with minimal failure probability for the case of ceramic components with given loads with and without volume constraint. A number of further questions naturally arise.

First of all the uniqueness of the optimal solution has not been investigated. As $\mathscr{O}^{\rm ad}$ does not have a linear structure, convexity properties of the functional $J(\Omega,u(\Omega))$ are not easily defined. Here the understanding of $\mathscr{O}^{\rm ad}$ as a infinite dimensional manifold \cite{Schul} and the study of convexity on manifolds \cite{Udr} might be of interest.

One of the attractive features of the objective functionals that originate from the probabilistic analysis is that they fit quite nicely into the framework of shape calculus \cite{SZ} and it is a natural requirement to consider algorithms for the actual maximization of survival probabilities that are based on shape gradients. In fact it is not difficult to follow the calculations of \cite[Chapter 3]{SZ} in order to show that on the level of formal calculations
\begin{equation}
dJ(\Omega,u(\Omega))[\mathscr{V}]:=\frac{d}{dt}J(\Omega_t,u(\Omega_t))\restriction_{T=0}=\int_\Omega \nabla h(Du):Du'[\mathscr{V}]\, dx
\end{equation}
is the shape derivative of $dJ(\Omega,u(\Omega))$ with respect to the vector field $\mathscr{V}:\widehat \Omega\to\R^3$. $\Omega_t$ here stands for the image of $\Omega$ under the flow generated by $\mathscr{V}$. $u'[\mathscr{V}]$ is the shape derivative of $u$ with respect to $\mathscr{V}$ fulfils the PDE given in \cite[Theorem 3.11]{SZ}. The first order optimallity conditions can then be written as $dJ(\Omega,u(\Omega))[\mathscr{V}]=0$ $\forall \mathscr V$. The mathematical details however are more subtle and go beyond the scope of this article. Notably \cite[Theorem 3.11]{SZ} obviously requires more regularity of $u(\Omega)$ that it is provided by weak solutions that have been used here.

A second interesting aspect is the question, if the uniform cone condition in the definition of $\mathscr{O}^ {\rm ad}$ could be relaxed. It seems to us that inward corners due to the stress concentration that occurs at the tip, will be effectively penalized by high failure probabilities. Outward corners do not carry stress and thus are not helpful either, at least if a volume constraint is active. Rough, fractal boundaries therefore do not seem to be preferred by the objective functional. So the somewhat artificial geometric constraints that are hidden in the constants $\theta,l,r$ of the cone property might well turn out to be redundant, if the analysis is carried further.

\noindent {\bf Acknowledgements:} We would like to thank Patricia H\"ulsmeier and Christoph Ziegeler for making their Ph.D. Theses available to us. We are grateful to Rolf Krause from ICS Lugano for interesting discussions. We also thank the referees for reading the submitted article very carefully and providing many suggestions for improvement.

\bibliographystyle{jota} 
\bibliography{keramik}

\newpage

\noindent {\sc Matthias Bolten and Hanno Gottschalk} \\ 
Department of
Mathematics and Science, \\
Bergische Universit\"at
Wuppertal,\\ {\tt bolten@math.uni-wuppertal.de},\\
{\tt hanno.gottschalk@uni-wuppertal.de}

\vspace{1cm}

\noindent {\sc Sebastian Schmitz }\\
 Institute of Computational
Science,\\
 Universit\`a della Svizzera Italiana,\\
  Lugano,\\
{\tt sebastian.schmitz@usi.ch}

\end{document}